\documentclass[dvipdfmx]{article}
\usepackage{amsthm, amscd, amsmath, amsfonts, mathrsfs}
\newtheorem{theo}{Theorem}[section]
\newtheorem{proposition}[theo]{Proposition}

\newtheorem{rem}[theo]{Remark}
\newtheorem{definition}[theo]{Definition}

\input xy
\xyoption{all}

\begin{document}

\title{On a B-field Transform of Generalized Complex Structures over Complex Tori}

\author{Kazushi Kobayashi\footnote{Department of Mathematics, Faculty of Education, Mie University, 1577 Kurimamachiya-cho, Tsu city, Mie, 514-8507, Japan. E-mail: kobayashi@edu.mie-u.ac.jp. 2020 Mathematics Subject Classification: 14J33, 53D37, 53D18 (primary), 14F08, 53C08 (secondary). Keywords: torus, generalized complex geometry, gerbe, homological mirror symmetry.}}

\date{}

\maketitle

\maketitle

\begin{abstract}
Let $(X^n,\check{X}^n)$ be a mirror pair of an $n$-dimensional complex torus $X^n$ and its mirror partner $\check{X}^n$. Then, by SYZ transform, we can construct a holomorphic line bundle with an integrable connection from each pair of a Lagrangian section of $\check{X}^n\to \mathbb{R}^n/\mathbb{Z}^n$ and a unitary local system along it, and those holomorphic line bundles with integrable connections forms a dg-category $DG_{X^n}$. In this paper, we focus on a certain B-field transform of the generalized complex structure induced from the complex structure on $X^n$, and interpret it as the deformation $X_{\mathcal{G}}^n$ of $X^n$ by a flat gerbe $\mathcal{G}$. Moreover, we construct the deformation of $DG_{X^n}$ associated to the deformation from $X^n$ to $X_{\mathcal{G}}^n$, and also discuss the homological mirror symmetry between $X_{\mathcal{G}}^n$ and its mirror partner on the object level.
\end{abstract}

\tableofcontents

\section{Introduction}
\subsection{Background}
Let $X^n$ be an $n$-dimensional complex torus and $\check{X}^n$ be its mirror partner. The aim of this paper is two-fold: Firstly, to understand the deformation $X_{\mathcal{G}}^n$ of $X^n$ by a certain flat gerbe $\mathcal{G}$ and the corresponding deformation $\check{X}_{\mathcal{G}}^n$ of $\check{X}^n$ via generalized complex geometry. Secondary, to discuss the homological mirror symmetry for the deformed mirror pair $(X_{\mathcal{G}}^n, \check{X}_{\mathcal{G}}^n)$ on the object level. In this subsection, in order to explain the details of these aims, we review the homological mirror symmetry briefly. 

The homological mirror symmetry conjecture \cite{Kon} was originally proposed for Calabi-Yau manifolds by Kontsevich in 1994, and it states the following: for each Calabi-Yau manifold $M$, there exists a Calabi-Yau manifold $\check{M}$ such that there exists an equivalence
\begin{equation*}
D^b(Coh(M))\cong Tr(Fuk(\check{M}))
\end{equation*}
as triangulated categories. Here, $D^b(Coh(M))$ is the bounded derived category of coherent sheaves over $M$ and $Tr(Fuk(\check{M}))$ is the enhanced triangulated category of the Fukaya category $Fuk(\check{M})$ over $\check{M}$ \cite{Fukaya category} in the sense of Bondal-Kapranov-Kontsevich construction \cite{bondal, Kon}. In general, when we regard the mirror pair $(X^n, \check{X}^n)$ as the trivial special Lagrangian torus fibrations $X^n\to \mathbb{R}^n/\mathbb{Z}^n$ and $\check{X}^n\to \mathbb{R}^n/\mathbb{Z}^n$ on the same base space $\mathbb{R}^n/\mathbb{Z}^n$ according to SYZ construction \cite{SYZ}, we can associate a holomorphic line bundle with an integrable connection $E(s,\mathcal{L}^{\nabla})\to X^n$ to each pair $(s,\mathcal{L}^{\nabla})$ of a Lagrangian section $s$ of $\check{X}^n\to \mathbb{R}^n/\mathbb{Z}^n$ and a unitary local system $\mathcal{L}^{\nabla}$ along it. Moreover, those holomorphic line bundles with integrable connections $E(s,\mathcal{L}^{\nabla})$ naturally forms a dg-category $DG_{X^n}$ (see \cite{kajiura, exact} for instance), and we expect that this dg-category $DG_{X^n}$ generates the bounded derived category of coherent sheaves $D^b(Coh(X^n))$ over $X^n$ in the sense of Bondal-Kapranov-Kontsevich construction (at least, it is known that it split generates $D^b(Coh(X^n))$ when $X^n$ is an abelian variety \cite{orlov, abouzaid}). Hence, to consider the dg-category $DG_{X^n}$ is valid in the study of the homological mirror symmetry for $(X^n, \check{X}^n)$.

So far, various studies of the homological mirror symmetry for tori have been done since one of the most fundamental examples of mirror pairs is a pair of tori (\cite{elliptic, dg, Fuk, A-inf, abouzaid} etc.), and the approach via generalized complex geometry is one of them. Generalized complex geometry was introduced by Hitchin and Gualtieri in \cite{hitchin, Gual}, and this provides a way of treating complex structures and symplectic structures uniformly. In particular, Oren Ben-Bassat intensively studies the mirror symmetry from the viewpoint of generalized complex geometry in \cite{part1, part2}, and the framework of generalized complex geometry is also used in our previous studies of the homological mirror symmetry for tori \cite{kazushi, gerby}.

\subsection{Motivation}
First, let us recall the generalized complex structure induced from the complex structure on $X^n$ and B-field transforms of it. We assume that the period matrix of $X^n$ is given by a regular\footnote{This regularity assumption is not necessary when we define an $n$-dimensional complex torus $X^n=\mathbb{C}^n/\mathbb{Z}^n\oplus T\mathbb{Z}^n$. This is necessary in order to define a mirror partner of $X^n$ (see \cite{kazushi}).} matrix $T\in M(n;\mathbb{C})$ such that the imaginary part of $T$ is positive definite, i.e., $X^n=\mathbb{C}^n/\mathbb{Z}^n\oplus T\mathbb{Z}^n$, where $M(n;\mathbb{C})$ denotes the set of complex matrices of order $n$ (we use the notations $M(n;\mathbb{R})$ and $M(n;\mathbb{Z})$ in this sense). We sometimes decompose the matrix $T$ into its real part $X\in M(n;\mathbb{R})$ and its imaginary part $Y\in M(n;\mathbb{R})$: $T=X+\mathbf{i}Y$, where $\mathbf{i}:=\sqrt{-1}$. For this $X^n$, we can take an open covering $\{ O_i \}_{i\in I}$ such that the open sets $O_i$ and their intersections are all contractible, and we locally express the complex coordinates of $O_i$ ($i\in I$) as $x+Ty$, where
\begin{equation*}
x:=\left( \begin{array}{ccc} x_1 \\ \vdots \\ x_n \end{array} \right), \ y:=\left( \begin{array}{ccc} y_1 \\ \vdots \\ y_n \end{array} \right)\in \mathbb{R}^n.
\end{equation*}
Let us consider the generalized complex structure $\mathcal{I}_T : \Gamma(TX^n\oplus T^*X^n)\to \Gamma(TX^n\oplus T^*X^n)$ induced from the complex structure on $X^n$, where $TX^n$, $T^*X^n$ and $\Gamma(TX^n\oplus T^*X^n)$ denote the tangent bundle on $X^n$, the cotangent bundle on $X^n$ and the space of smooth sections of $TX^n\oplus T^*X^n$, respectively. The representation matrix of $\mathcal{I}_T$ with respect to the basis $\mathcal{B}:=\{ \frac{\partial}{\partial x_1},\cdots, \frac{\partial}{\partial x_n}, \frac{\partial}{\partial y_1},\cdots, \frac{\partial}{\partial y_n}, dx_1,\cdots, dx_n, dy_1,\cdots, dy_n \}$ of $\Gamma(TX^n\oplus T^*X^n)$ is described by
\begin{equation*}
\left( \begin{array}{cc} I_T & O \\ O & -I_T^t \end{array} \right), \ \ \ I_T:=\left( \begin{array}{cc} -XY^{-1} & -Y-XY^{-1}X \\ Y^{-1} & Y^{-1}X \end{array} \right),
\end{equation*}
where $A^t$ denotes the transpose of a given matrix $A$. In this setting, by using two arbitrary alternating matrices $B_1$, $B_3\in M(n;\mathbb{R})$ and an arbitrary matrix $B_2\in M(n;\mathbb{R})$, we consider the closed 2-form 
\begin{equation}
\pi \sum_{i,j=1}^n \Bigl( (B_1)_{ij}dx_i\wedge dx_j +2(B_2)_{ij}dx_i\wedge dy_j +(B_3)_{ij}dy_i\wedge dy_j \Bigr) \label{B-field}
\end{equation}
called a B-field, and define an alternating matrix $B\in M(2n;\mathbb{R})$ by
\begin{equation*}
B=\left( \begin{array}{cc} B_1 & B_2 \\ -B_2^t & B_3 \end{array} \right).
\end{equation*}
Then, associated to the B-field (\ref{B-field}), we can consider the B-field transform of $\mathcal{I}_T$ whose representation matrix with respect to the basis $\mathcal{B}$ of $\Gamma(TX^n \oplus T^*X^n)$ is given by
\begin{equation*}
\left( \begin{array}{cc} I_{2n} & O \\ -B & I_{2n} \end{array} \right) \left( \begin{array}{cc} I_T & O \\ O & -I_T^t \end{array} \right) \left( \begin{array}{cc} I_{2n} & O \\ B & I_{2n} \end{array} \right),
\end{equation*}
where $I_{2n}$ denotes the identity matrix of order $2n$. 

Now, we roughly explain the correspondence between gerby deformations of $X^n$ and B-field transforms of $\mathcal{I}_T$. In our setting, although we can construct a gerbe in the sense of Hitchin-Chatterjee as a family of smooth complex line bundles on $O_i\cap O_j$ ($i$, $j\in I$) (cf. \cite{chat, h}), such smooth complex line bundles are all trivial since $O_i \cap O_j$ is contractible for each $i$, $j\in I$. This fact indicates that such a gerbe is trivial. However, we can consider a ``flat'' connection in the sense of Hitchin-Chatterjee, i.e., 0-connection, and 1-connection which is compatible with it over this trivial gerbe. In particular, 1-connection is a 2-form (not a 1-form), and it can be regarded as the above B-field (\ref{B-field}). 

By concerning the above, in our previous work \cite{gerby}, we discuss the case of ``$B_2$ type'', i.e., the case such that $B_2=\tau \in M(n;\mathbb{Z})$\footnote{Precisely speaking, we consider the case $B_2=\tau^t$ in \cite{gerby}.} and $B_1=B_3=O$. Actually, although we can consider the B-field transform of $\mathcal{I}_T$ in the case of ``$B_2$ type'' itself for $B_2=\tau \in M(n;\mathbb{R})$, the case $B_2=\tau \in M(n;\mathbb{Z})$ is particularly important in the sense that the corresponding deformation is closely related to a generalization of the $SL(2;\mathbb{Z})$-action (up to shifts) over $D^b(Coh(X^1))$ in the case of elliptic curves (cf. \cite{fm, slaction}) to the higher dimensional case (we omit the details of this fact here). Therefore, we focus on the case $B_2=\tau \in M(n;\mathbb{Z})$ only in \cite{gerby}. In this paper, as a sequel of our previous work \cite{gerby}, we investigate the B-field transform of $\mathcal{I}_T$ in the case of ``$B_1$ type'', i.e., the case such that $B_1=\tau \in M(n;\mathbb{R})$ and $B_2=B_3=O$, including the corresponding deformation in the symplectic geometry side, and discuss the homological mirror symmetry for the mirror pair of the deformed tori on the object level.

On the other hand, if we focus on the case ``$B_3$ type'', i.e., the case such that $B_3=\tau \in M(n;\mathbb{R})$ and $B_1=B_2=O$, then we need to consider the ``$\beta$-field'' transform depending on $\tau \in M(n;\mathbb{R})$ of the corresponding generalized complex structure in the symplectic geometry side. In general, it is closely related to a deformation quantization of the (complexified) symplectic torus $\check{X}^n$. This will be discussed in \cite{beta}.

\subsection{Related works}
As mentioned in subsection 1.2, although this paper is a sequel of our recent paper \cite{gerby}, these are based on some prior works which are explained below.

From the viewpoint of generalized complex geometry which is introduced by Hitchin and Gualtieri \cite{hitchin, Gual}, when we regard a complex manifold as a generalized complex manifold, as explained in subsection 1.2, it is thought that a B-field transform of the generalized complex structure is interpreted as a gerby deformation of the given complex manifold. This interpretation is mentioned in \cite{Gual}. Applications of generalized complex geometry to the mirror symmetry are particularly well studied by Oren Ben-Bassat in \cite{part1, part2}. There, more precisely, he discusses the homological mirror symmetry for generalized complex manifolds under ``adapted requirement'' (see \cite[Definition 4.2]{part1})\footnote{Note that the definition of the mirror transform used in \cite{part1, part2} differs from the definition employed in this paper.}. For example, in our setting, the generalized complex structure $\mathcal{I}_T$ induced from the complex structure on $X^n$ whose period matrix $T$ is given by $\mathbf{i}Y$ ($Y\in M(n;\mathbb{R})$ is positive definite) satisfies ``adapted requirement''. Moreover, the generalized complex structure induced from the canonical complex structure on a (general) complex manifold is also a typical example which satisfies ``adapted requirement''. In particular, relatively general B-field transforms of the generalized complex structure induced from the canonical complex structure on a complex manifold (B-field transforms treated in this paper are special cases of them) and the mirror dual counterparts are briefly explained in subsection 5.2 in \cite{part2} (under ``adapted requirement''). 

As examples of other related works, in \cite{kap1, kap2} (see also \cite{cal}), Kapustin and Orlov study the effects of B-fields on the bounded derived categories of coherent sheaves over abelian varieties (we do not assume that $X^n$ is an abelian variety in this paper) and the Fukaya categories over given symplectic manifolds, including the formulation of the statement of the ``deformed version'' of the homological mirror symmetry conjecture. In particular, although we do not treat superconformal vertex algebras in this paper, in \cite{kap1, kap2}, they examine properties of the bounded derived categories of twisted sheaves over the deformed abelian varieties from the viewpoint of superconformal vertex algebras.

On the other hand, in this paper, similarly as in our previous work \cite{gerby}, we intensively discuss certain B-field transforms of $\mathcal{I}_T$, i.e., gerby deformations of complex tori and the homological mirror symmetry without ``adapted requirement''. Although our discussions are basically based on the works explained in the above, we especially investigate the deformation of the category which is employed in the complex geometry side, i.e., the dg-category $DG_{X^n}$ associated to the deformation of $X^n$ by a certain flat gerbe and the counterpart in the symplectic geometry side in more detail than the existing works.

\subsection{Main results and the plan of this paper}
We take an arbitrary alternating matrix $\tau \in M(n;\mathbb{R})$, and fix it. In this paper, first, we define the gerby deformed $n$-dimensional complex torus $X_{\mathcal{G}_{\tau}}^n$ by regarding the B-field transform of $B_1$ type: $B_1=\tau \in M(n;\mathbb{R})$ for $\mathcal{I}_T$ as the twisting by the flat gerbe $\mathcal{G}_{\tau}$ in the sense of Hitchin-Chatterjee's work \cite{chat, h} (see also \cite{han}), where $\tau\in M(n;\mathbb{R})$ determines the flat connection (identified with the B-field of $B_1$ type essentially), and also consider its mirror partner $\check{X}_{\mathcal{G}_{\tau}}^n$ via generalized complex geometry. Next, we give the deformation of each object $E(s,\mathcal{L}^{\nabla})\to X^n$ as the twisted holomorphic line bundle with the twisted integrable connection $E(s,\mathcal{L}^{\nabla}(\tau))\to X_{\mathcal{G}_{\tau}}^n$ in the complex geometry side\footnote{Although it seems natural to write $E(s,\mathcal{L}^{\nabla})(\tau)$ instead of $E(s,\mathcal{L}^{\nabla}(\tau))$, we employ the notation $E(s,\mathcal{L}^{\nabla}(\tau))$ by concerning deformations in the symplectic geometry side.}. Moreover, we prove that these objects $E(s,\mathcal{L}^{\nabla}(\tau))$ forms a dg-category $DG_{X_{\mathcal{G}_{\tau}}^n}$ in Theorem \ref{maintheorem1}. In particular, we can regard this dg-category $DG_{X_{\mathcal{G}_{\tau}}^n}$ as a natural deformation of the dg-category $DG_{X^n}$ consisting of objects $E(s,\mathcal{L})$. On the other hand, the symplectic geometry side is a little complicated. More explicitly, although we can discuss the deformation of objects $(s,\mathcal{L}^{\nabla})$ according to the definition \cite[Definition 1.1]{Fuk} (see also \cite{kap1, kap2} and so on) in the case $\tau \in M(n;\mathbb{Z})$, it becomes necessary to reconsider how to define objects which should be treated on the deformed complexified symplectic torus under the assumption $\tau \not \in M(n;\mathbb{Z})$. Also, in the symplectic geometry side, note that each Lagrangian section $s$ is not deformed since the symplectic structure on $\check{X}^n$ is preserved associated to the deformation from $X^n$ to $X_{\mathcal{G}_{\tau}}^n$. By concerning the above problem, we decompose $\tau$ into its integer part $\tau_{\mathbb{Z}}\in M(n;\mathbb{Z})$ and its fractional part $\tau_{\rm frac}\in M(n;\mathbb{R})$:
\begin{equation*}
\tau =\tau_{\mathbb{Z}}+\tau_{\rm frac}.
\end{equation*}
Under this setting, we propose giving the deformation of each unitary local system $\mathcal{L}^{\nabla}$ as a ``twisted'' smooth complex line bundles with a ``twisted'' connection $\mathcal{L}^{\nabla}(\tau)$. Here, the twisting of $\mathcal{L}^{\nabla}$ is defined by using $\tau_{\rm frac}$, and at least, $\tau_{\mathbb{Z}}$ determines the first Chern character of $\mathcal{L}^{\nabla}(\tau)$ in the case $\tau \in M(n;\mathbb{Z})$, i.e., $\tau_{\rm frac}=O$. Then, we can check that these deformed objects $E(s,\mathcal{L}^{\nabla}(\tau))$ and $(s,\mathcal{L}^{\nabla}(\tau))$ are mirror dual to each other, and this result is given in Proposition \ref{generalizationmirror}. These imply that our deformations discussed in this paper seem to be natural from the viewpoint of the homological mirror symmetry for $(X_{\mathcal{G}_{\tau}}^n,\check{X}_{\mathcal{G}_{\tau}}^n)$.

This paper is organized as follows. In section 2, we recall the definition of gerbes in the sense of Hitchin-Chatterjee briefly. In section 3, we construct a mirror partner $\check{X}^n$ of a given $n$-dimensional complex torus $X^n$ via generalized complex geometry. In section 4, we define the deformation $X_{\mathcal{G}_{\tau}}^n$ of $X^n$ by $\mathcal{G}_{\tau}$ as the generalized complex torus whose generalized complex structure is given by the B-field transform of $B_1$ type: $B_1=\tau \in M(n;\mathbb{R})$ for $\mathcal{I}_T$, and construct its mirror partner $\check{X}_{\mathcal{G}_{\tau}}^n$. Main discussions are provided in section 5. The purpose of section 5 is to discuss the deformations on both sides associated to the deformation from $(X^n,\check{X}^n)$ to $(X_{\mathcal{G}_{\tau}}^n,\check{X}_{\mathcal{G}_{\tau}}^n)$. In subsection 5.1, we recall the correspondence between holomorphic line bundles with integrable connections $E(s,\mathcal{L}^{\nabla})$ and pairs $(s,\mathcal{L}^{\nabla})$, i.e., Lagrangian sections $s$ of $\check{X}^n\to \mathbb{R}^n/\mathbb{Z}^n$ with unitary local systems $\mathcal{L}^{\nabla}$ along them. In subsection 5.2, we discuss deformations $E(s,\mathcal{L}^{\nabla}(\tau))$ of objects $E(s,\mathcal{L}^{\nabla})$, and prove that these objects $E(s,\mathcal{L}^{\nabla}(\tau))$ naturally forms a dg-category $DG_{X_{\mathcal{G}_{\tau}}^n}$ in Theorem \ref{maintheorem1}. In subsection 5.3, we explain the problem which should be treated, and give the outline of an idea to solve the problem. In subsection 5.4, we describe the details of the idea which is mentioned in subsection 5.3, namely, we discuss deformations of objects $(s,\mathcal{L}^{\nabla})$ which are compatible with the counterpart in the mirror dual complex geometry side by focusing the division $\tau =\tau_{\mathbb{Z}}+\tau_{\rm frac}$. As a consequence, for given mirror dual objects $E(s,\mathcal{L}^{\nabla})$ and $(s,\mathcal{L}^{\nabla})$, we can confirm that the deformations of them are also mirror dual to each other. This result is given in Proposition \ref{generalizationmirror}. On the other hand, in general, the choice of a division of $\tau$ is not unique. For instance, when we focus on the trivial division $\tau =O+\tau$, we can also consider the twisting of $\mathcal{L}^{\nabla}$ associated to $\tau$. However, it can be expected that our proposal explained in this paper does not depend on the choice of such a division of $\tau$. We also describe this perspective in subsection 5.4.

\section{Gerbes in the sense of Hitchin-Chatterjee}
The purpose of this subsection is to recall the definition of gerbes\footnote[1]{Gerbes were originally introduced by Giraud in \cite{giraud}.} with connections in the sense of Hitchin-Chatterjee's work \cite{chat, h} briefly (see also \cite{han}). 

Roughly speaking, for a given smooth manifold $M$, a gerbe on $M$ is determined by an open covering $\{ U_i \}_{i\in I}$ of $M$ and an element of $H^2(M;\mathcal{A}^*)$, where $\mathcal{A}^*$ is the sheaf of nowhere vanishing smooth functions on $M$, so we need to focus on each intersection $U_{ijk}$ of three open sets $U_i$, $U_j$, $U_k$ if we consider gerbes according to the original definition (we use the notations $U_{ij}:=U_i\cap U_j$, $U_{ijk}:=U_i\cap U_j\cap U_k$, etc. in the above sense). On the other hand, Hitchin and Chatterjee propose a way of defining gerbes as families of line bundles defined on intersections $U_{ij}$ of two open sets. This definition is given as follows.
\begin{definition} \label{gerbe}
A gerbe $\mathcal{G}(I,L,\theta)$ on a smooth manifold $M$ is defined by the following data\textup{:} \\
\textup{(i)} An open covering $\{ U_i \}_{i\in I}$ of $M$. \\
\textup{(ii)} A family $L:=\{ L_{ij} \}_{i,j\in I}$ of smooth complex line bundles $L_{ij}\to U_{ij}$ such that $L_{ji}\cong L_{ij}^*$ on each $U_{ij}$ and $L_{ijk}:=L_{ij}|_{U_{ijk}}\otimes L_{jk}|_{U_{ijk}}\otimes L_{ki}|_{U_{ijk}}$ is isomorphic to $\mathcal{O}_{ijk}$ on each $U_{ijk}$, where $L_{ij}^*$ is the dual of $L_{ij}$ and $\mathcal{O}_{ijk}$ is the trivial complex line bundle on $U_{ijk}$. \\
\textup{(iii)} A family $\theta:=\{ \theta_{ijk} \}_{i,j,k\in I}$ of smooth sections $\theta_{ijk}\in \Gamma(U_{ijk};L_{ijk})$ such that $\theta_{ijk}=\theta_{jik}^{-1}=\theta_{ikj}^{-1}=\theta_{kji}^{-1}$ on each $U_{ijk}$ and 
\begin{equation*}
(\delta \theta)_{ijkl}=\theta_{jkl}\otimes \theta_{ikl}^{-1}\otimes \theta_{ijl}\otimes \theta_{ijk}^{-1}=1 
\end{equation*}
holds on each $U_{ijkl}$, where $\Gamma(U_{ijk};L_{ijk})$ denotes the space of smooth sections of $L_{ijk}$ on $U_{ijk}$ and $\delta$ is the derivative in the sense of $\check{C}$ech cohomology.
\end{definition}
\begin{rem}
In the condition \textup{(ii)} in Definition \ref{gerbe}, a trivialization $L_{ijk}\cong \mathcal{O}_{ijk}$ is not being fixed. Moreover, we see that
\begin{align*}
&(\delta^2 L)_{ijkl} \\
&=(L_{jk}\otimes L_{kl}\otimes L_{lj})\otimes(L_{ik}\otimes L_{kl}\otimes L_{li})^{-1}\otimes (L_{ij}\otimes L_{jl}\otimes L_{li})\otimes (L_{ij}\otimes L_{jk}\otimes L_{ki})^{-1} \\ &=\mathcal{O}_{ijkl}
\end{align*}
holds on each $U_{ijkl}$ since each smooth complex line bundle $L_{ij}$ satisfies the condition $L_{ji}\cong L_{ij}^*=L_{ij}^{-1}$, where $\mathcal{O}_{ijkl}$ is the trivial complex line bundle on $U_{ijkl}$. Hence, the condition \textup{(iii)} in Definition \ref{gerbe} states that $(\delta \theta)_{ijkl}$ is the canonical trivialization $1$ of $(\delta^2 L)_{ijkl}=\mathcal{O}_{ijkl}$.
\end{rem}
Connections over a gerbe $\mathcal{G}(I,L,\theta)$ are defined as follows.
\begin{definition} \label{0-conn}
Let $\mathcal{G}(I,L,\theta)$ be a gerbe on a smooth manifold $M$. We call a family $\nabla:=\{ \nabla_{ij} \}_{i,j\in I}$ of connections $\nabla_{ij}$ on $L_{ij}\to U_{ij}$ such that
\begin{equation*}
\nabla_{ijk}\theta_{ijk}=0
\end{equation*}
on each $U_{ijk}$ a 0-connection over $\mathcal{G}(I,L,\theta)$. Here, $\nabla_{ijk}$ denotes the connection on $L_{ijk}$ which is induced from the connections $\nabla_{ij}$, $\nabla_{jk}$, $\nabla_{ki}$\textup{:}
\begin{equation*}
\nabla_{ijk}:=\nabla_{ij}+\nabla_{jk}+\nabla_{ki}.
\end{equation*}
\end{definition}
In particular, note that each $\nabla_{ijk}$ in Definition \ref{0-conn} is a flat connection since $\nabla_{ijk}$ is a connection on $L_{ijk}$ which is isomorphic to $\mathcal{O}_{ijk}$. Therefore, the curvature form $\Omega_{ij}$ of each connection $\nabla_{ij}$ satisfies the relation 
\begin{equation*}
\Omega_{ij}+\Omega_{jk}+\Omega_{ki}=(\delta \Omega)_{ijk}=0,
\end{equation*}
so $\{ \Omega_{ij} \}_{i,j\in I}$ defines a representative in $H^1(M;\mathcal{A}^2)$, where $\mathcal{A}^2$ is the sheaf of smooth 2-forms on $M$. This implies the existence of $\beta_i\in \Gamma(U_i;\mathcal{A}^2)$ such that
\begin{equation*}
(\delta \beta)_{ij}=\Omega_{ij}
\end{equation*}
for each $\Omega_{ij}$ since $\mathcal{A}^2$ is a fine sheaf, i.e., $H^1(M;\mathcal{A}^2)\cong 0$.
\begin{definition} \label{1-conn}
We call the family $\beta:=\{ \beta_i \}_{i\in I}$ of 2-forms $\beta_i$ in the above a 1-connection over $\mathcal{G}(I,L,\theta)$ which is compatible with the 0-connection $\nabla$.
\end{definition}
On the other hand, the definition of a ``holomorphic'' gerbe on a complex manifold is also given by replacing ``smooth complex line bundles $L_{ij}\to U_{ij}$'' and ``smooth sections $\theta_{ijk}$'' with ``holomorphic line bundles $L_{ij}\to U_{ij}$'' and ``holomorphic sections $\theta_{ijk}$'', respectively in Definition \ref{gerbe}. In particular, we can naturally rewrite the notions of 0 and 1-connections in Definition \ref{0-conn} and Definition \ref{1-conn} to the ``holomorphic'' version (see section 5 in \cite{chat}).

\section{A mirror pair $(X^n, \check{X}^n)$}
Let $T$ be a complex matrix of order $n\in \mathbb{N}$ such that $\mathrm{Im}T$ is positive definite and $\mathrm{det}T\not=0$\footnote[1]{Although we do not need to assume the condition $\mathrm{det}T\not=0$ when we define an $n$-dimensional complex torus $X^n=\mathbb{C}^n/\mathbb{Z}^n\oplus T\mathbb{Z}^n$, in our setting described below, the mirror partner $\check{X}^n$ of $X^n$ does not exist if $\mathrm{det}T=0$. However, we can avoid this problem and discuss the homological mirror symmetry even if $\mathrm{det}T=0$ by modifying the definition of the mirror partner of $X^n$ and a class of objects which we treat. This fact is discussed in \cite{kazushi}.}. We denote the $n$-dimensional complex torus $\mathbb{C}^n/\mathbb{Z}^n\oplus T\mathbb{Z}^n$ by $X^n$:
\begin{equation*}
X^n:=\mathbb{C}^n/\mathbb{Z}^n\oplus T\mathbb{Z}^n.
\end{equation*}
In this section, we define a mirror partner $\check{X}^n$ of $X^n$ via the framework of generalized complex geometry (\cite{Gual, part1, part2} etc.). Throughout this section, for a smooth manifold $M$ and a smooth vector bundle $E$, $TM$, $T^*M$ and $\Gamma(E)$ denote the tangent bundle on $M$, the cotangent bundle on $M$ and the space of smooth sections of $E$, respectively.

First, let us prepare notations. We sometimes regard $X^n$ as a $2n$-dimensional real torus $\mathbb{R}^{2n}/\mathbb{Z}^{2n}$. We fix an $\epsilon>0$ small enough and let
\begin{align*}
O_{m_1\cdots m_n}^{l_1\cdots l_n}:=\biggl\{ \left( \begin{array}{ccc}x \\ y \end{array} \right)\in X^n \ \Bigl. \Bigr| \ &\frac{l_j-1}{3}-\epsilon <x_j <\frac{l_j}{3}+\epsilon, \\
&\frac{m_k-1}{3}-\epsilon <y_k <\frac{m_k}{3}+\epsilon, \ j,k=1,\cdots, n \biggr\}
\end{align*}
be subsets in $X^n$, where $l_j$, $m_k=1,2,3$, 
\begin{equation*}
x:=\left( \begin{array}{ccc} x_1 \\ \vdots \\ x_n \end{array} \right), \ y:=\left( \begin{array}{ccc} y_1 \\ \vdots \\ y_n \end{array} \right), 
\end{equation*}
and we identify $x_i\sim x_i+1$, $y_i\sim y_i+1$ for each $i=1$, $\cdots$, $n$. If necessary, we denote
\begin{equation*}
O_{m_1\cdots (m_k=m)\cdots m_n}^{l_1\cdots (l_j=l)\cdots l_n}
\end{equation*}
instead of $O_{m_1\cdots m_n}^{l_1\cdots l_n}$ in order to specify the values $l_j=l$, $m_k=m$. Then, $\{ O_{m_1\cdots m_n}^{l_1\cdots l_n} \}_{l_j, m_k=1,2,3}$ is an open covering of $X^n$, and we define the local coordinates of $O_{m_1\cdots m_n}^{l_1\cdots l_n}$ by 
\begin{equation*}
\left( \begin{array}{cccccc} x_1 \\ \vdots \\ x_n \\ y_1 \\ \vdots \\ y_n \end{array} \right)\in \mathbb{R}^{2n}.
\end{equation*}
Moreover, we locally express the complex coordinates 
\begin{equation*}
z:=\left( \begin{array}{ccc} z_1 \\ \vdots \\ z_n \end{array} \right)
\end{equation*}
of $X^n$ as $z=x+Ty$.

Below, we denote the transpose of a given matrix $A$ by $A^t$. In the above situation, we consider a linear map
\begin{equation*}
\mathcal{I}_T : \Gamma(TX^n\oplus T^*X^n)\rightarrow \Gamma(TX^n\oplus T^*X^n)
\end{equation*}
which is expressed locally as 
\begin{align}
&\mathcal{I}_T \left( \frac{\partial}{\partial x}^t, \frac{\partial}{\partial y}^t, dx^t, dy^t \right) \notag \\
&=\left( \frac{\partial}{\partial x}^t, \frac{\partial}{\partial y}^t, dx^t, dy^t \right) \notag \\
&\hspace{3.5mm} \left( \begin{array}{cccc} -XY^{-1} & -Y-XY^{-1}X & O & O \\ Y^{-1} & Y^{-1}X & O & O \\ O & O & (Y^{-1})^tX^t & -(Y^{-1})^t \\ O & O & Y^t+X^t(Y^{-1})^tX^t & -X^t(Y^{-1})^t \end{array} \right), \label{comp}
\end{align}
where
\begin{equation*}
X:=\mathrm{Re}T, \ Y:=\mathrm{Im}T,
\end{equation*}
and
\begin{equation*}
\frac{\partial}{\partial x}:=\left( \begin{array}{ccc} \frac{\partial}{\partial x_1} \\ \vdots \\ \frac{\partial}{\partial x_n} \end{array} \right), \ \frac{\partial}{\partial y}:=\left( \begin{array}{ccc} \frac{\partial}{\partial y_1} \\ \vdots \\ \frac{\partial}{\partial y_n} \end{array} \right), \
dx:=\left( \begin{array}{ccc} dx_1 \\ \vdots \\ dx_n \end{array} \right), \ dy:=\left( \begin{array}{ccc} dy_1 \\ \vdots \\ dy_n \end{array} \right).
\end{equation*}
We can check that this linear map $\mathcal{I}_T$ defines a generalized complex structure over $X^n$.

Let us define a mirror partner $\check{X}^n$ of the $n$-dimensional complex torus $X^n$. In general, for a $2n$-dimensional real torus $T^{2n}$ equipped with a generalized complex structure $\mathcal{I}$ over $T^{2n}$, its mirror dual generalized complex structure $\check{\mathcal{I}}$ over $T^{2n}$ is defined by
\begin{equation}
\check{\mathcal{I}}=\left( \begin{array}{cccc} I_n & O & O & O \\ O & O & O & I_n \\ O & O & I_n & O \\ O & I_n & O & O \end{array} \right) \mathcal{I} \left( \begin{array}{cccc} I_n & O & O & O \\ O & O & O & I_n \\ O & O & I_n & O \\ O & I_n & O & O \end{array} \right), \label{mirrordef}
\end{equation}
where $I_n$ is the identity matrix of order $n$ (cf. \cite{part1, part2, Kaj, kazushi}). According to the definition (\ref{mirrordef}), we can calculate the mirror dual generalized complex structure $\check{\mathcal{I}}_T$ of $\mathcal{I}_T$ locally as 
\begin{align}
&\check{\mathcal{I}}_T \left( \frac{\partial}{\partial \check{x}}^t, \frac{\partial}{\partial \check{y}}^t, d\check{x}^t, d\check{y}^t \right) \notag \\
&=\left( \frac{\partial}{\partial \check{x}}^t, \frac{\partial}{\partial \check{y}}^t, d\check{x}^t, d\check{y}^t \right) \notag \\
&\hspace{3.5mm} \left( \begin{array}{cccc} -XY^{-1} & O & O & -Y-XY^{-1}X \\ O & -X^t(Y^{-1})^t & Y^t+X^t(Y^{-1})^tX^t & O \\ O & -(Y^{-1})^t & (Y^{-1})^tX^t & O \\ Y^{-1} & O & O & Y^{-1}X \end{array} \right), \label{symp}
\end{align}
where 
\begin{equation*}
\check{x}:=\left( \begin{array}{ccc} x^1 \\ \vdots \\ x^n \end{array} \right), \ \check{y}:=\left( \begin{array}{ccc} y^1 \\ \vdots \\ y^n \end{array} \right) 
\end{equation*}
are the local coordinates of a $2n$-dimensional real torus $\mathbb{R}^{2n}/\mathbb{Z}^{2n}\approx (\mathbb{R}^n/\mathbb{Z}^n)\times (\mathbb{R}^n/\mathbb{Z}^n)$, and 
\begin{equation*}
\frac{\partial}{\partial \check{x}}:=\left( \begin{array}{ccc} \frac{\partial}{\partial x^1} \\ \vdots \\ \frac{\partial}{\partial x^n} \end{array} \right), \ \frac{\partial}{\partial \check{y}}:=\left( \begin{array}{ccc} \frac{\partial}{\partial y^1} \\ \vdots \\ \frac{\partial}{\partial y^n} \end{array} \right), \ d\check{x}:=\left( \begin{array}{ccc} dx^1 \\ \vdots \\ dx^n \end{array} \right), \ d\check{y}:=\left( \begin{array}{ccc} dy^1 \\ \vdots \\ dy^n \end{array} \right).
\end{equation*}
We can rewrite the representation matrix in (\ref{symp}) to
\begin{align*}
&\left( \begin{array}{cccc} I_n & O & O & O \\ O & I_n & O & O \\ O & -\mathrm{Re}(-(T^{-1})^t) & I_n & O \\ (\mathrm{Re}(-(T^{-1})^t))^t & O & O & I_n \end{array} \right) \\
&\left( \begin{array}{cccc} O & O & O & -((\mathrm{Im}(-(T^{-1})^t))^{-1})^t \\ O & O & (\mathrm{Im}(-(T^{-1})^t))^{-1} & O \\ O & -\mathrm{Im}(-(T^{-1})^t) & O & O \\ (\mathrm{Im}(-(T^{-1})^t))^t & O & O & O \end{array} \right) \\
&\left( \begin{array}{cccc} I_n & O & O & O \\ O & I_n & O & O \\ O & \mathrm{Re}(-(T^{-1})^t) & I_n & O \\ -(\mathrm{Re}(-(T^{-1})^t))^t & O & O & I_n \end{array} \right).
\end{align*}
In particular, the assumption $\mathrm{det}T\not=0$ implies $\mathrm{det}(Y+XY^{-1}X)\not=0$, and
\begin{align*}
&\mathrm{Re}(-(T^{-1})^t)=-((Y+XY^{-1}X)^{-1})^tX^t(Y^{-1})^t, \\ 
&\mathrm{Im}(-(T^{-1})^t)=((Y+XY^{-1}X)^{-1})^t.
\end{align*}
Thus, when we define a mirror partner of the $n$-dimensional complex torus $X^n$ according to the definition (\ref{mirrordef}), it is given by the $2n$-dimensional real torus $\mathbb{R}^{2n}/\mathbb{Z}^{2n}$ with the complexified symplectic form
\begin{equation*}
\tilde{\omega}^{\vee}:=2\pi \mathbf{i}d\check{x}^t (T^{-1})^t d\check{y}.
\end{equation*}
Here, $\mathbf{i}:=\sqrt{-1}$, and when we decompose $\tilde{\omega}^{\vee}$ into 
\begin{equation*}
\tilde{\omega}^{\vee}=2\pi d\check{x}^t \mathrm{Im}(-(T^{-1})^t) d\check{y}-\mathbf{i} \Bigl\{ 2\pi d\check{x}^t \mathrm{Re}(-(T^{-1})^t) d\check{y} \Bigr\},
\end{equation*}
its real part
\begin{equation*}
\omega^{\vee}:=2\pi d\check{x}^t \mathrm{Im}(-(T^{-1})^t) d\check{y} 
\end{equation*}
gives a symplectic form on $\mathbb{R}^{2n}/\mathbb{Z}^{2n}$, and the closed $2$-form
\begin{equation}
B^{\vee}:=2\pi d\check{x}^t \mathrm{Re}(-(T^{-1})^t) d\check{y} \label{bfield}
\end{equation}
is called a B-field. Moreover, we define two matrices $\omega_{\rm mat}^{\vee}$ and $B_{\rm mat}^{\vee}$ by
\begin{equation*}
\omega_{\rm mat}^{\vee}=\mathrm{Im}(-(T^{-1})^t)
\end{equation*}
and
\begin{equation*}
B_{\rm mat}^{\vee}=\mathrm{Re}(-(T^{-1})^t),
\end{equation*}
respectively:
\begin{equation*}
\omega^{\vee}=2\pi d\check{x}^t \omega_{\rm mat}^{\vee} d\check{y}, \ B^{\vee}=2\pi d\check{x}^t B_{\rm mat}^{\vee} d\check{y}.
\end{equation*}
Hereafter, we denote this complexified symplectic torus by
\begin{equation*}
\check{X}^n:=\Bigl( \mathbb{R}^{2n}/\mathbb{Z}^{2n}, \ \tilde{\omega}^{\vee}=2\pi \mathbf{i}d\check{x}^t (T^{-1})^t d\check{y} \Bigr).
\end{equation*}

\section{A gerby deformation $X_{\mathcal{G}_{\tau}^{\nabla}}^n$ of $X^n$ and its mirror partner $\check{X}_{\mathcal{G}_{\tau}^{\nabla}}^n$}
Below, we denote the set of real matrices of order $n$ by $M(n;\mathbb{R})$ (we use the notation $M(n;\mathbb{Z})$ in this sense). Let us take an arbitrary alternating matrix $\tau \in M(n;\mathbb{R})$: $\tau^t =-\tau$, and we fix it. In this section, we focus on the generalized complex torus whose generalized complex structure is given by the B-field transform of ``$B_1$ type: $B_1=\tau \in M(n;\mathbb{R})$ and $B_2=B_3=O$'' as mentioned in subsection 1.2 for the generalized complex structure $\mathcal{I}_T$ over $X^n$, and define the gerby deformed complex torus $X_{\mathcal{G}_{\tau}^{\nabla}}^n$ by regarding it as the deformation of $X^n$ by a flat gerbe $\mathcal{G}_{\tau}^{\nabla}$. Moreover, we also define a mirror partner $\check{X}_{\mathcal{G}_{\tau}^{\nabla}}^n$ of $X_{\mathcal{G}_{\tau}^{\nabla}}^n$ via the framework of generalized complex geometry. 

First, we prepare notations. For each open set $O_{m_1\cdots m_n}^{l_1\cdots l_n}$ in $X^n$, we put
\begin{equation*}
l:=(l_1\cdots l_n), \ m:=(m_1\cdots m_n),
\end{equation*}
and denote $O_{m_1\cdots m_n}^{l_1\cdots l_n}$ by $O_m^l$:
\begin{equation*}
O_m^l:=O_{m_1\cdots m_n}^{l_1\cdots l_n}.
\end{equation*}
The family $\{ O_m^l \}_{(l;m)\in I}$ of these open sets $O_m^l$ gives an open covering of $X^n$, i.e., 
\begin{equation*}
X^n=\bigcup_{(l;m)\in I}O_m^l,
\end{equation*}
where
\begin{equation*}
I:=\Bigl\{ (l;m)=(l_1\cdots l_n ; m_1\cdots m_n) \ | \ l_1,\cdots, l_n, m_1,\cdots, m_n=1,2,3 \Bigr\}.
\end{equation*}
Moreover, we denote
\begin{equation*}
O_{mm'}^{ll'}:=O_m^l \cap O_{m'}^{l'}, \ O_{mm'm''}^{ll'l''}:=O_m^l \cap O_{m'}^{l'} \cap O_{m''}^{l''}
\end{equation*}
for $(l;m)$, $(l';m')$, $(l'';m'')\in I$, and define a smooth complex line bundle $L_{mm'm''}^{ll'l''}\rightarrow O_{mm'm''}^{ll'l''}$ by
\begin{equation*}
L_{mm'm''}^{ll'l''}=\left. L_{mm'}^{ll'} \right|_{O_{mm'm''}^{ll'l''}}\otimes \left. L_{m'm''}^{l'l''} \right|_{O_{mm'm''}^{ll'l''}}\otimes \left.L_{m''m}^{l''l} \right|_{O_{mm'm''}^{ll'l''}}
\end{equation*}
for given three smooth complex line bundles $L_{mm'}^{ll'}\rightarrow O_{mm'}^{ll'}$, $L_{m'm''}^{l'l''}\rightarrow O_{m'm''}^{l'l''}$, $L_{m''m}^{l''l}\rightarrow O_{m''m}^{l''l}$.

Let us take a 2-form $^{\tau}\!B_m^l$ on the open set $O_m^l$ which is expressed locally as
\begin{equation}
^{\tau}\!B_m^l:=\pi dx^t \tau dx \label{btau}
\end{equation}
for each $(l;m)\in I$ by using $\tau \in M(n;\mathbb{R})$. Note that this $^{\tau}\!B_m^l$ is defined globally, and we call this globally defined 2-form $\{ ^{\tau}\!B_m^l \}_{(l;m)\in I}$ a B-field. We consider the B-field transform of the generalized complex structure $\mathcal{I}_T$ over $X^n$ (see also the local expression (\ref{comp})) associated to this B-field $\{ ^{\tau}\!B_m^l \}_{(l;m)\in I}$:
\begin{align*}
&\mathcal{I}_T\left( \{ ^{\tau}\!B_m^l \}_{(l;m)\in I} \right) \left( \frac{\partial}{\partial x}^t, \frac{\partial}{\partial y}^t, dx^t, dy^t \right) \\
&=\left( \frac{\partial}{\partial x}^t, \frac{\partial}{\partial y}^t, dx^t, dy^t \right) \left( \begin{array}{cccc} I_n & O & O & O \\ O & I_n & O & O \\ -\tau & O & I_n & O \\ O & O & O & I_n \end{array} \right) \mathcal{I}_T \left( \begin{array}{cccc} I_n & O & O & O \\ O & I_n & O & O \\ \tau & O & I_n & O \\ O & O & O & I_n \end{array} \right).
\end{align*}
Here, the matrix
\begin{equation*}
\left( \begin{array}{cc} -\tau & O \\ O & O \end{array} \right)
\end{equation*}
corresponds to the representation matrix of the map $\Gamma(TX^n)\to \Gamma(T^*X^n)$ associated to the B-field $\{ ^{\tau}\!B_m^l \}_{(l;m)\in I}$, i.e., $X\mapsto ^{\tau}\!B_m^l(X, \ \cdot \ )$ ($X\in \Gamma(TX^n)$) with respect to the bases 
\begin{equation*}
\left\{ \frac{\partial}{\partial x_1},\cdots, \frac{\partial}{\partial x_n}, \frac{\partial}{\partial y_1},\cdots, \frac{\partial}{\partial y_n} \right\}
\end{equation*}
of $\Gamma(TX^n)$ and 
\begin{equation*}
\Bigl\{ dx_1,\cdots, dx_n, dy_1,\cdots, dy_n \Bigr\}
\end{equation*}
of $\Gamma(T^*X^n)$. In particular, by direct calculations, we see that this B-field transform preserves the generalized complex structure over $X^n$, i.e., $\mathcal{I}_T(\{ ^{\tau}\!B_m^l \}_{(l;m)\in I})=\mathcal{I}_T$, if and only if $\tau =O$. 

We give the interpretation for the B-field transform $\mathcal{I}_T(\{ ^{\tau}\!B_m^l \}_{(l;m)\in I})$ of $\mathcal{I}_T$ from the viewpoint of the deformation of $X^n$ by a flat gerbe. It is clear that the open sets $O_m^l$ and their intersections are all contractible by the definition of the open covering $\{ O_m^l \}_{(l;m)\in I}$, so let us consider the trivial gerbe $\mathcal{G}(I,\mathcal{O},\xi)$ on $X^n$:
\begin{align*}
&\mathcal{O}:=\Bigl\{ \mathcal{O}_{mm'}^{ll'}\to O_{mm'}^{ll'} \Bigr\}_{(l;m),(l';m')\in I}, \\ &\xi:=\Bigl\{ \xi_{mm'm''}^{ll'l''}=1\in \Gamma(O_{mm'm''}^{ll'l''};\mathcal{O}_{mm'm''}^{ll'l''}) \Bigr\}_{(l;m),(l';m'),(l'';m'')\in I}.
\end{align*}
Now, we define a flat connection over $\mathcal{G}(I,\mathcal{O},\xi)$ by using the matrix $\tau \in M(n;\mathbb{R})$ which determines the B-field $\{ ^{\tau}\!B_m^l \}_{(l;m)\in I}$ as follows. For each $(l;m)$, $(l';m')\in I$, we take a 1-form $^{\tau}\!\omega_{mm'}^{ll'}$ on the open set $O_{mm'}^{ll'}$ which is expressed locally as
\begin{equation*}
^{\tau}\!\omega_{mm'}^{ll'}:=
\begin{cases}
\pi \tau_j^t dx & (l_j=1, l_j'=3, l_h=l_h' (h\not=j), m=m') \\ 
\pi \tau_j^t dx & (l_j=1, l_j'=3, l_h=l_h' (h\not=j), m_k=1, m_k'=3, m_i=m_i' (i\not=k)) \\ 
\pi \tau_j^t dx & (l_j=1, l_j'=3, l_h=l_h' (h\not=j), m_k=3, m_k'=1, m_i=m_i' (i\not=k)) \\ 
0 & (\mathrm{otherwise}),
\end{cases}
\end{equation*}
where $j$, $k=1$, $\cdots$, $n$, 
\begin{equation*}
\tau_j :=\left( \begin{array}{ccc} \tau_{j1} \\ \vdots \\ \tau_{jn} \end{array} \right)\in \mathbb{R}^n,
\end{equation*}
and assume the condition $^{\tau}\!\omega_{m'm}^{l'l}=-^{\tau}\!\omega_{mm'}^{ll'}$. By using them, we locally define a flat connection $^{\tau}\!\nabla_{mm'}^{ll'}$ on the trivial complex line bundle $\mathcal{O}_{mm'}^{ll'}\rightarrow O_{mm'}^{ll'}$ by
\begin{equation*}
^{\tau}\!\nabla_{mm'}^{ll'}=d-\mathbf{i}^{\tau}\!\omega_{mm'}^{ll'}
\end{equation*}
for each $(l;m)$, $(l';m')\in I$, where $d$ denotes the exterior derivative, and the family of $^{\tau}\!\nabla_{mm'}^{ll'}$ gives a 0-connection
\begin{equation*}
\nabla_{\tau}:=\Bigl\{ ^{\tau}\!\nabla_{mm'}^{ll'} \Bigr\}_{(l;m),(l';m')\in I}
\end{equation*}
over $\mathcal{G}(I,\mathcal{O},\xi)$. Then, we can easily check that the family of $-\mathbf{i}^{\tau}\!B_m^l$ defines a 1-connection
\begin{equation}
B_{\tau}:=\Bigl\{ -\mathbf{i}^{\tau}\!B_m^l \Bigr\}_{(l;m)\in I} \label{B}
\end{equation}
over $\mathcal{G}(I,\mathcal{O},\xi)$ which is compatible with the 0-connection $\nabla_{\tau}$. We denote the trivial gerbe $\mathcal{G}(I,\mathcal{O},\xi)$ with the flat connection $(\nabla_{\tau}, B_{\tau})$ by $\mathcal{G}_{\tau }^{\nabla}$:
\begin{equation*}
\mathcal{G}_{\tau }^{\nabla}:=\Bigl( \mathcal{G}(I,\mathcal{O},\xi), \ \nabla_{\tau}, \ B_{\tau} \Bigr).
\end{equation*}

Thus, we may identify the B-field $\{ ^{\tau}\!B_m^l \}_{(l;m)\in I}$ with the 1-connection $B_{\tau}$ over $\mathcal{G}(I, \mathcal{O}, \xi)$, so hereafter, we use the notation $\mathcal{I}_T(B_{\tau})$ instead of $\mathcal{I}_T(\{ ^{\tau}\!B_m^l \}_{(l;m)\in I})$. Concerning the above discussions, we regard the generalized complex torus whose generalized complex structure is given by $\mathcal{I}_T(B_{\tau})$ as the deformation of $X^n$ by the flat gerbe $\mathcal{G}_{\tau}^{\nabla}$, and denote it by
\begin{equation*}
X_{\mathcal{G}_{\tau}^{\nabla}}^n:=\Bigl( X^n, \ \mathcal{I}_T(B_{\tau}) \Bigr).
\end{equation*}

Let us calculate the mirror dual generalized complex structure $\check{\mathcal{I}}_T(B_{\tau})$ of $\mathcal{I}_T(B_{\tau})$ according to the definition (\ref{mirrordef}). By direct calculations, we see
\begin{align*}
&\check{\mathcal{I}}_T(B_{\tau}) \left( \frac{\partial}{\partial \check{x}}^t, \frac{\partial}{\partial \check{y}}^t, d\check{x}^t, d\check{y}^t \right) \\
&=\left( \frac{\partial}{\partial \check{x}}^t, \frac{\partial}{\partial \check{y}}^t, d\check{x}^t, d\check{y}^t \right) \left( \begin{array}{cccc} I_n & O & O & O \\ O & I_n & O & O \\ -\tau & O & I_n & O \\ O & O & O & I_n \end{array} \right) \check{\mathcal{I}}_T \left( \begin{array}{cccc} I_n & O & O & O \\ O & I_n & O & O \\ \tau & O & I_n & O \\ O & O & O & I_n \end{array} \right),
\end{align*}
and this fact implies that the mirror partner of $X_{\mathcal{G}_{\tau }^{\nabla}}^n$ is given by the $2n$-dimensional real torus $\mathbb{R}^{2n}/\mathbb{Z}^{2n}$ with the complexified symplectic form
\begin{equation*}
\tilde{\omega}_{\tau }^{\vee}:=2\pi \mathbf{i}d\check{x}^t (T^{-1})^t d\check{y}-\pi \mathbf{i}d\check{x}^t \tau d\check{x}.
\end{equation*}
Here, similarly as in the case of $\check{X}^n$, when we decompose $\tilde{\omega}_{\tau }^{\vee}$ into
\begin{align*}
\tilde{\omega}_{\tau }^{\vee}&=2\pi d\check{x}^t \omega_{\rm mat}^{\vee} d\check{y}-\mathbf{i} \Bigl( 2\pi d\check{x}^t B_{\rm mat}^{\vee} d\check{y}+\pi d\check{x}^t \tau d\check{x} \Bigr) \\
&=\omega^{\vee}-\mathbf{i}\Bigl( B^{\vee}+\pi d\check{x}^t \tau d\check{x} \Bigr),
\end{align*}
its real part $\omega^{\vee}$ gives a symplectic form on $\mathbb{R}^{2n}/\mathbb{Z}^{2n}$, and the closed 2-form
\begin{equation*}
B_{\tau }^{\vee}:=B^{\vee}+\pi d\check{x}^t \tau d\check{x} 
\end{equation*}
is a B-field. In particular, associated to the deformation of $X^n$ by $\mathcal{G}_{\tau }^{\nabla}$, in the symplectic geometry side, the symplectic form $\omega^{\vee}$ is preserved and the B-field $B^{\vee}$ on $\check{X}^n$ is twisted by $\pi d\check{x}^t \tau d\check{x}$ (see also the definition (\ref{bfield})). Hereafter, we denote this complexified symplectic torus (the mirror partner of $X_{\mathcal{G}_{\tau }}^n$ which is obtained by the definition (\ref{mirrordef})) by
\begin{equation*}
\check{X}_{\mathcal{G}_{\tau }^{\nabla}}^n:=\Bigl( \mathbb{R}^{2n}/\mathbb{Z}^{2n}, \ \tilde{\omega}_{\tau }^{\vee}=2\pi \mathbf{i}d\check{x}^t (T^{-1})^t d\check{y}-\pi \mathbf{i}d\check{x}^t \tau d\check{x} \Bigr).
\end{equation*}

\section{Deformations of objects and the homological mirror symmetry}
In general, the mirror pair $(X^n,\check{X}^n)$ can be regarded as the trivial special Lagrangian torus fibrations $X^n\rightarrow \mathbb{R}^n/\mathbb{Z}^n$ and $\check{X}^n\rightarrow \mathbb{R}^n/\mathbb{Z}^n$ on the same base space $\mathbb{R}^n/\mathbb{Z}^n$ by SYZ construction \cite{SYZ}. Then, we can construct a holomorphic line bundle with an integrable connection on $X^n$ from each pair of a Lagrangian section of $\check{X}^n\rightarrow \mathbb{R}^n/\mathbb{Z}^n$ and a unitary local system along it, and this transformation is called SYZ transform (see \cite{leung, A-P}). The purpose of this section is to discuss deformations of them over the deformed mirror pair $(X_{\mathcal{G}_{\tau}^{\nabla}}^n,\check{X}_{\mathcal{G}_{\tau}^{\nabla}}^n)$.

\subsection{Holomorphic line bundles with integrable connections and Lagrangian submanifolds with unitary local systems}
In this subsection, as preparations of main discussions, we recall the relation between Lagrangian submanifolds in $\check{X}^n$ with unitary local systems along them and the corresponding holomorphic line bundles with integrable connections on $X^n$ based on SYZ transform.

First, we explain the complex geometry side, namely, define a class of holomorphic line bundles $E_{(s,a,q)}\to X^n$ with integrable connections $\nabla_{(s,a,q)}$. Hereafter, we sometimes denote a holomorphic line bundle $E_{(s,a,q)}\to X^n$ with an integrable connection $\nabla_{(s,a,q)}$ by $E_{(s,a,q)}^{\nabla}$, i.e., $E_{(s,a,q)}^{\nabla}:=(E_{(s,a,q)}, \nabla_{(s,a,q)})$. In fact, we first construct it as a smooth complex line bundle on $X^n$ with a connection, and discuss the holomorphicity of such a smooth complex line bundle later (Proposition \ref{hol}). Before giving the strict definition of $E_{(s,a,q)}^{\nabla}$, we mention the idea of the construction of $E_{(s,a,q)}$ since the notations of transition functions of it are very complicated. Although we will give the details of the symplectic geometry side again later, in general, the Lagrangian submanifold in $\check{X}^n$ corresponding to a holomorphic line bundle with an integrable connection $E_{(s,a,q)}^{\nabla}$ is expressed locally as
\begin{equation*}
\left\{ \left( \begin{array}{cc} \check{x} \\ s(\check{x}) \end{array} \right)\in \check{X}^n \approx (\mathbb{R}^n/\mathbb{Z}^n)\times (\mathbb{R}^n/\mathbb{Z}^n) \right\},
\end{equation*}
where
\begin{equation*}
s(\check{x}):=\left( \begin{array}{ccc} s^1(\check{x}) \\ \vdots \\ s^n(\check{x}) \end{array} \right)
\end{equation*}
and locally defined smooth functions $s^1(\check{x}),\cdots ,s^n(\check{x})$ satisfy the relations
\begin{equation*}
\begin{array}{ccc} s^1\left( \begin{array}{cccc} x^1+1 \\ x^2 \\ \vdots \\ x^n \end{array} \right)=s^1\left( \begin{array}{cccc} x^1 \\ x^2 \\ \vdots \\ x^n \end{array} \right)+a_{11}, & \cdots & s^1\left( \begin{array}{cccc} x^1 \\ \vdots \\ x^{n-1} \\ x^n+1 \end{array} \right)=s^1\left( \begin{array}{cccc} x^1 \\ \vdots \\ x^{n-1} \\ x^n \end{array} \right)+a_{1n}, \\
\vdots & & \vdots \\
s^n\left( \begin{array}{cccc} x^1+1 \\ x^2 \\ \vdots \\ x^n \end{array} \right)=s^n\left( \begin{array}{cccc} x^1 \\ x^2 \\ \vdots \\ x^n \end{array} \right)+a_{n1}, & \cdots & s^n\left( \begin{array}{cccc} x^1 \\ \vdots \\ x^{n-1} \\ x^n+1 \end{array} \right)=s^n\left( \begin{array}{cccc} x^1 \\ \vdots \\ x^{n-1} \\ x^n \end{array} \right)+a_{nn} \end{array}
\end{equation*}
for a matrix
\begin{equation*}
a:=\left( \begin{array}{ccc} a_{11} & \cdots & a_{1n} \\ \vdots & \ddots & \vdots \\ a_{n1} & \cdots & a_{nn} \end{array} \right)\in M(n;\mathbb{Z}).
\end{equation*}
Then, the transition functions of $E_{(s,a,q)}$ are determined by this $a\in M(n;\mathbb{Z})$. Below, we give the strict definition of $E_{(s,a,q)}$. Let
\begin{equation*}
\psi ^{l_1 \cdots l_n}_{m_1 \cdots m_n} : O^{l_1 \cdots l_n}_{m_1 \cdots m_n}\to O^{l_1 \cdots l_n}_{m_1 \cdots m_n} \times \mathbb{C} \hspace{5mm} (l_j, m_k =1,2,3)
\end{equation*}
be a smooth section of $E_{(s,a,q)}|_{O^{l_1 \cdots l_n}_{m_1 \cdots m_n}}$. For each $j=1$, $\cdots$, $n$ and $k=1$, $\cdots$, $n$, the transition functions of $E_{(s,a,q)}$ are non-trivial on 
\begin{equation*}
O^{l_1 \cdots (l_j=3) \cdots l_n}_{m_1 \cdots m_n}\cap O^{l_1 \cdots (l_j=1) \cdots l_n}_{m_1 \cdots m_n}, \ O^{l_1 \cdots (l_j=3) \cdots l_n}_{m_1 \cdots (m_k=3) \cdots m_n}\cap O^{l_1 \cdots (l_j=1) \cdots l_n}_{m_1 \cdots (m_k=1) \cdots m_n}, \ O^{l_1 \cdots (l_j=3) \cdots l_n}_{m_1 \cdots (m_k=1) \cdots m_n}\cap O^{l_1 \cdots (l_j=1) \cdots l_n}_{m_1 \cdots (m_k=3) \cdots m_n},
\end{equation*}
and otherwise are trivial. For each $j=1$, $\cdots$, $n$ and $k=1$, $\cdots$, $n$, we define the transition functions on 
\begin{equation*}
O^{l_1 \cdots (l_j =3) \cdots l_n}_{m_1 \cdots m_n}\cap O^{l_1 \cdots (l_j =1) \cdots l_n}_{m_1 \cdots m_n}, \ O^{l_1 \cdots (l_j=3) \cdots l_n}_{m_1 \cdots (m_k=3) \cdots m_n}\cap O^{l_1 \cdots (l_j=1) \cdots l_n}_{m_1 \cdots (m_k=1) \cdots m_n}
\end{equation*}
and 
\begin{equation*}
O^{l_1 \cdots (l_j=3) \cdots l_n}_{m_1 \cdots (m_k=1) \cdots m_n}\cap O^{l_1 \cdots (l_j=1) \cdots l_n}_{m_1 \cdots (m_k=3) \cdots m_n}
\end{equation*}
by
\begin{align*}
&\biggl.\psi ^{l_1 \cdots (l_j =3) \cdots l_n}_{m_1 \cdots m_n} \biggr|_{O^{l_1 \cdots (l_j =3) \cdots l_n}_{m_1 \cdots m_n}\cap O^{l_1 \cdots (l_j =1) \cdots l_n}_{m_1 \cdots m_n}} \\
&=\mathrm{exp}\left( 2\pi \mathbf{i}a_j^t y \right) \biggl.\psi ^{l_1 \cdots (l_j =1) \cdots l_n}_{m_1 \cdots m_n} \biggr|_{O^{l_1 \cdots (l_j =3) \cdots l_n}_{m_1 \cdots m_n}\cap O^{l_1 \cdots (l_j =1) \cdots l_n}_{m_1 \cdots m_n}}, \\
&\biggl.\psi ^{l_1 \cdots (l_j =3) \cdots l_n}_{m_1 \cdots (m_k=3) \cdots m_n} \biggr|_{O^{l_1 \cdots (l_j =3) \cdots l_n}_{m_1 \cdots (m_k=3) \cdots m_n}\cap O^{l_1 \cdots (l_j =1) \cdots l_n}_{m_1 \cdots (m_k=1) \cdots m_n}} \\
&=\mathrm{exp}\left( 2\pi \mathbf{i}a_j^t y \right) \biggl.\psi ^{l_1 \cdots (l_j =1) \cdots l_n}_{m_1 \cdots (m_k=1) \cdots m_n} \biggr|_{O^{l_1 \cdots (l_j =3) \cdots l_n}_{m_1 \cdots (m_k=3) \cdots m_n}\cap O^{l_1 \cdots (l_j =1) \cdots l_n}_{m_1 \cdots (m_k=1) \cdots m_n}}
\end{align*}
and
\begin{align*}
&\biggl.\psi ^{l_1 \cdots (l_j =3) \cdots l_n}_{m_1 \cdots (m_k=1) \cdots m_n} \biggr|_{O^{l_1 \cdots (l_j =3) \cdots l_n}_{m_1 \cdots (m_k=1) \cdots m_n}\cap O^{l_1 \cdots (l_j =1) \cdots l_n}_{m_1 \cdots (m_k=3) \cdots m_n}} \\
&=\mathrm{exp}\left( 2\pi \mathbf{i}a_j^t y \right) \biggl.\psi ^{l_1 \cdots (l_j =1) \cdots l_n}_{m_1 \cdots (m_k=3) \cdots m_n} \biggr|_{O^{l_1 \cdots (l_j =3) \cdots l_n}_{m_1 \cdots (m_k=1) \cdots m_n}\cap O^{l_1 \cdots (l_j =1) \cdots l_n}_{m_1 \cdots (m_k=3) \cdots m_n}},
\end{align*}
respectively, where 
\begin{equation*}
a_j:=\left( \begin{array}{ccc} a_{1j} \\ \vdots \\ a_{nj} \end{array} \right)\in \mathbb{Z}^n. 
\end{equation*}
In particular, it is easy to check that the cocycle condition is satisfied. Moreover, by using a locally defined smooth function
\begin{equation*}
s(x):=\left( \begin{array}{ccc} s^1(x) \\ \vdots \\ s^n(x) \end{array} \right)
\end{equation*}
satisfying the relations
\begin{equation}
s\left( \begin{array}{cccc} x_1+1 \\ x_2 \\ \vdots \\ x_n \end{array} \right)=s\left( \begin{array}{cccc} x_1 \\ x_2 \\ \vdots \\ x_n \end{array} \right)+a_1, \ \cdots, \ s\left( \begin{array}{cccc} x_1 \\ \vdots \\ x_{n-1} \\ x_n+1 \end{array} \right)=s\left( \begin{array}{cccc} x_1 \\ \vdots \\ x_{n-1} \\ x_n \end{array} \right)+a_n \label{section}
\end{equation}
and a constant vector
\begin{equation*}
q:=\left( \begin{array}{ccc} q_1 \\ \vdots \\ q_n \end{array} \right)\in \mathbb{R}^n,
\end{equation*}
we locally define a connection $\nabla_{(s,a,q)}$ on $E_{(s,a,q)}$ by
\begin{equation}
\nabla_{(s,a,q)}=d+\omega_{(s,a,q)}, \ \ \ \omega_{(s,a,q)}=-2\pi \mathbf{i} \Bigl( s(x)^t+q^t T \Bigr)dy. \label{connection}
\end{equation}
Also, for the constant vectors $a_j\in \mathbb{Z}^n$ ($j=1$, $\cdots$, $n$), we set
\begin{equation*}
a:=(a_1,\cdots, a_n)\in M(n;\mathbb{Z}).
\end{equation*}
We can easily check that the above $\nabla_{(s,a,q)}$ is compatible with the transition functions of $E_{(s,a,q)}$ since the function $s$ satisfies the relations (\ref{section}). 

Here, we recall the definition of the integrability of connections following \cite{steven}. Let $X$ be a compact K$\ddot{\mathrm{a}}$hler manifold, and we consider a smooth complex vector bundle $E$ with a connection $D$ on $X$. In particular, $D^{(0,1)}$ denotes the (0,1)-part of $D$. 
\begin{definition}[{\cite[Definition 1.4.1]{steven}}] \label{i_c}
A connection $D$ is called integrable if $(D^{(0,1)})^2=0$.
\end{definition}
Therefore, $(E,D)$ can be considered as a holomorphic vector bundle on $X$ if $D$ is integrable.
\begin{rem} \label{i_c_r}
In this paper, as an analogue of Definition \ref{i_c}, for a twisted connection on an $\alpha$-twisted smooth complex vector bundle \textup{(}this $\alpha$ denotes the 2-cocycle which defines a given gerbe\textup{)}, we call the twisted connection a twisted integrable connection on the $\alpha$-twisted smooth complex vector bundle if the \textup{(0,2)}-part of its curvature form vanishes.
\end{rem}
Now, we give the following proposition without its proof since it can be proved similarly as in \cite[Proposition 3.1]{bijection} (see also \cite[Proposition 4.3]{kazushi}, \cite[Proposition 2.1]{exact} and \cite[Proposition 4.1]{gerby}), where
\begin{equation*}
\frac{\partial s}{\partial x}(x):=\left( \begin{array}{ccc} \frac{\partial s^1}{\partial x_1}(x) & \cdots & \frac{\partial s^1}{\partial x_n}(x) \\ \vdots & \ddots & \vdots \\ \frac{\partial s^n}{\partial x_1}(x) & \cdots & \frac{\partial s^n}{\partial x_n}(x) \end{array} \right).
\end{equation*}
\begin{proposition} \label{hol}
For a given locally defined smooth function $s$ satisfying the relations \textup{(\ref{section})} and a given constant vector $q\in \mathbb{R}^n$, the connection $\nabla_{(s,a,q)}$ is an integrable connection on $E_{(s,a,q)}\to X^n$ if and only if 
\begin{equation*}
\frac{\partial s}{\partial x}(x)T=\left( \frac{\partial s}{\partial x}(x)T \right)^t
\end{equation*}
holds.
\end{proposition}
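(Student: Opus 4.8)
The plan is to read ``integrable connection'' in the sense of the Koszul--Malgrange theorem: on a smooth complex line bundle over a complex manifold a connection is integrable (equivalently, it endows $E_{(s,a,q)}$ with a holomorphic structure whose holomorphic local sections are the $\nabla^{0,1}$-flat ones) precisely when the $(0,2)$-part of its curvature vanishes. Since $E_{(s,a,q)}$ is a line bundle, its curvature is the globally defined $2$-form $\Omega=d\omega_{(s,a,q)}$: gauge invariance makes $d\omega$ independent of the chosen trivialization, and compatibility of $\nabla_{(s,a,q)}$ with the transition functions has already been checked. So the whole statement reduces to computing $\Omega$, decomposing it by type with respect to the complex coordinates $z=x+Ty$, and deciding when its $(0,2)$-component is zero.

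First I would compute the curvature. As $q^t T$ is constant and $s=s(x)$ depends only on $x$, only the $s(x)^t\,dy$ term survives differentiation, giving
\begin{equation*}
\Omega = d\omega_{(s,a,q)} = -2\pi\mathbf{i}\sum_{i,j=1}^n \frac{\partial s^i}{\partial x_j}\, dx_j\wedge dy_i = -2\pi\mathbf{i}\, dx^t\left(\frac{\partial s}{\partial x}\right)^t\wedge dy .
\end{equation*}
Next I would pass to the complex frame. From $z=x+Ty$ and $\bar z=x+\bar T y$ one solves, with $Y=\mathrm{Im}\,T$,
\begin{equation*}
dy=\frac{1}{2\mathbf{i}}Y^{-1}(dz-d\bar z),\qquad dx=P\,dz+Q\,d\bar z,\qquad Q=\frac{1}{2\mathbf{i}}TY^{-1},\ \ P=\bar Q .
\end{equation*}
Writing $A:=\frac{\partial s}{\partial x}$ and $S:=-\frac{1}{2\mathbf{i}}Y^{-1}$ (the coefficient of $d\bar z$ in $dy$), and substituting into $\Omega$, only the product of the $d\bar z$-part $d\bar z^t Q^t$ of $dx^t$ with the $d\bar z$-part $S\,d\bar z$ of $dy$ contributes to type $(0,2)$, so the $(0,2)$-component of $\Omega$ equals $-2\pi\mathbf{i}\, d\bar z^t\,(Q^t A^t S)\,d\bar z$.

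The key observation is that a quadratic $2$-form $d\bar z^t M\,d\bar z$ vanishes identically iff $M$ is symmetric, since the wedge annihilates the symmetric part of $M$. Hence the $(0,2)$-part of $\Omega$ vanishes iff $Q^t A^t S$ is symmetric, i.e. $Q^t A^t S=S^t A Q$. Substituting $Q=\frac{1}{2\mathbf{i}}TY^{-1}$ and $S=-\frac{1}{2\mathbf{i}}Y^{-1}$, both sides acquire the common scalar $\tfrac14$ and the common outer factors $(Y^{-1})^t$ on the left and $Y^{-1}$ on the right, so after stripping these invertible factors the condition collapses to
\begin{equation*}
T^t A^t = A T,\qquad\text{i.e.}\qquad \frac{\partial s}{\partial x}(x)\,T=\left(\frac{\partial s}{\partial x}(x)\,T\right)^t,
\end{equation*}
which is exactly the asserted criterion. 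I expect the main (though still routine) obstacle to be the bookkeeping in the type decomposition---keeping the matrix orders straight while substituting $dx,dy$ and isolating precisely the $d\bar z\wedge d\bar z$ piece; once that is done the algebra to the symmetric condition is immediate, and pleasantly it never uses symmetry of $Y$, only its invertibility, in accordance with the fact that $X^n$ is not assumed to be an abelian variety.
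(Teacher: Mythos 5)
Your argument is correct and matches the approach the paper intends: the paper omits the proof of Proposition \ref{hol} (deferring to \cite{bijection} and related references), but its proof of the twisted analogue, Proposition \ref{twhol}, proceeds exactly as you do --- compute $\Omega=-2\pi\mathbf{i}\,dx^t(\partial s/\partial x)^t\,dy$, pass to the $(dz,d\bar z)$ frame via $(T-\bar T)^{-1}=\frac{1}{2\mathbf{i}}Y^{-1}$, and observe that the $(0,2)$-part $d\bar z^t M\,d\bar z$ vanishes iff $M$ is symmetric, which collapses to $\frac{\partial s}{\partial x}(x)T=\bigl(\frac{\partial s}{\partial x}(x)T\bigr)^t$. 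Your coefficient matrices $P,Q,S$ agree with the paper's $((T-\bar T)^{-1})^t(\cdot)(T-\bar T)^{-1}$ expression, so nothing further is needed.
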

Although we will explain the details in subsection 5.2 (see Theorem \ref{maintheorem1}), these holomorphic line bundles with integrable connections $E_{(s,a,q)}^{\nabla}$ forms a dg-category $DG_{X^n}$ (see also \cite{kajiura, exact} for instance). In general, for any $A_{\infty}$-category $\mathscr{A}$, we can construct the enhanced triangulated category $Tr(\mathscr{A})$ by using Bondal-Kapranov-Kontsevich construction \cite{bondal, Kon}. We expect that the dg-category $DG_{X^n}$ generates the bounded derived category of coherent sheaves $D^b(Coh(X^n))$ over $X^n$ in the sense of Bondal-Kapranov-Kontsevich construction:
\begin{equation*}
Tr(DG_{X^n})\cong D^b(Coh(X^n)).
\end{equation*}
At least, it is known that it split generates $D^b(Coh(X^n))$ when $X^n$ is an abelian variety (cf. \cite{orlov, abouzaid}).

Next, we explain the symplectic geometry side, namely, give the objects in the symplectic geometry side corresponding to holomorphic line bundles with integrable connections $E_{(s,a,q)}^{\nabla}$. 

Before starting main discussions, we recall the definition of objects of the Fukaya category over a given complexified symplectic manifold (see \cite[Definition 1.1]{Fuk} for example). Let $M_{\omega, B}:=(M, 2\pi \omega-2\pi \mathbf{i}B)$ be a complexified symplectic manifold, where $M$ is a smooth even-dimensional real manifold, $2\pi \omega$ is a symplectic form on $M$, and $2\pi B$ is a B-field on $M$. For this $M_{\omega, B}$, an object of the Fukaya category over $M_{\omega, B}$ consists of a Lagrangian submanifold $L$ in $M_{\omega, B}$, i.e., a submanifold $L$ in $M_{\omega, B}$ which satisfies
\begin{equation*}
\mathrm{dim}_{\mathbb{R}}L=\frac{1}{2}\mathrm{dim}_{\mathbb{R}}M, \ \Bigl. 2\pi \omega \Bigr|_L=0,
\end{equation*}
and a smooth complex line bundle $\mathcal{L}\to L$ with a unitary connection $\nabla_{\mathcal{L}}$ such that 
\begin{equation*}
\Omega_{\nabla_{\mathcal{L}}}=\Bigl. -2\pi \mathbf{i}B \Bigr|_L,
\end{equation*}
where $\Omega_{\nabla_{\mathcal{L}}}$ denotes the curvature form of $\nabla_{\mathcal{L}}$.

Let us consider the $n$-dimensional submanifold
\begin{equation*}
L_{(s,a)}:=\left\{ \left( \begin{array}{cc} \check{x} \\ s(\check{x}) \end{array} \right)\in \check{X}^n\approx (\mathbb{R}^n/\mathbb{Z}^n)\times (\mathbb{R}^n/\mathbb{Z}^n) \right\}
\end{equation*}
in $\check{X}^n$ by using a locally defined smooth function
\begin{equation*}
s(\check{x}):=\left( \begin{array}{ccc} s^1(\check{x}) \\ \vdots \\ s^n(\check{x}) \end{array} \right)
\end{equation*}
satisfying the relations
\begin{equation}
s\left( \begin{array}{cccc} x^1+1 \\ x^2 \\ \vdots \\ x^n \end{array} \right)=s\left( \begin{array}{cccc} x^1 \\ x^2 \\ \vdots \\ x^n \end{array} \right)+a_1, \ \cdots, \ s\left( \begin{array}{cccc} x^1 \\ \vdots \\ x^{n-1} \\ x^n+1 \end{array} \right)=s\left( \begin{array}{cccc} x^1 \\ \vdots \\ x^{n-1} \\ x^n \end{array} \right)+a_n. \label{lagsection}
\end{equation}
As explained in the above, we use the notation $a$ in the sense
\begin{equation*}
a:=(a_1,\cdots, a_n)\in M(n;\mathbb{Z}),
\end{equation*}
and for later convenience, we set
\begin{equation*}
\frac{\partial s}{\partial \check{x}}(\check{x}):=\left( \begin{array}{ccc} \frac{\partial s^1}{\partial x^1}(\check{x}) & \cdots & \frac{\partial s^1}{\partial x^n}(\check{x}) \\ \vdots & \ddots & \vdots \\ \frac{\partial s^n}{\partial x^1}(\check{x}) & \cdots & \frac{\partial s^n}{\partial x^n}(\check{x}) \end{array} \right).
\end{equation*}
We further take the trivial complex line bundle $\mathcal{L}_{(s,a,q)}:=\mathcal{O}_{L_{(s,a)}}\to L_{(s,a)}$ with the flat connection
\begin{equation*}
\nabla_{\mathcal{L}_{(s,a,q)}}:=d-2\pi \mathbf{i} q^t d\check{x} 
\end{equation*}
which is defined by a constant vector
\begin{equation*}
q:=\left( \begin{array}{ccc} q_1 \\ \vdots \\ q_n \end{array} \right)\in \mathbb{R}^n.
\end{equation*}
Hereafter, we set $\mathcal{L}_{(s,a,q)}^{\nabla}:=(\mathcal{L}_{(s,a,q)}, \nabla_{\mathcal{L}_{(s,a,q)}})$, and let us denote a pair of an $n$-dimensional submanifold $L_{(s,a)}$ in $\check{X}^n$ and the trivial complex line bundle with a flat connection $\mathcal{L}_{(s,a,q)}^{\nabla}$ by $\mathscr{L}_{(s,a,q)}^{\nabla}$: $\mathscr{L}_{(s,a,q)}^{\nabla}:=(L_{(s,a)}, \mathcal{L}_{(s,a,q)}^{\nabla})$. Then, the following proposition holds. Here, we omit its proof since it can be proved in a similar way which is written in subsection 4.1 in \cite{bijection} (see also section 4 in \cite{kazushi}, section 2 in \cite{exact} and \cite[Proposition 4.2]{gerby}).
\begin{proposition} \label{fukob}
For a given locally defined smooth function $s$ satisfying the relations \textup{(\ref{lagsection})} and a given constant vector $q\in \mathbb{R}^n$, the pair $\mathscr{L}_{(s,a,q)}^{\nabla}$ gives an object of the Fukaya category over $\check{X}^n$ if and only if
\begin{equation*}
\frac{\partial s}{\partial \check{x}}(\check{x})T=\left( \frac{\partial s}{\partial \check{x}}(\check{x})T \right)^t
\end{equation*}
holds.
\end{proposition}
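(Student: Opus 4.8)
The plan is to reduce the statement ``$\mathscr{L}_{(s,a,q)}^{\nabla}$ is an object of the Fukaya category over $\check{X}^n$'' to a single linear-algebraic symmetry condition on the Jacobian matrix $\frac{\partial s}{\partial \check{x}}(\check{x})$. First I would note that the local system is automatic: the connection $\nabla_{\mathcal{L}_{(s,a,q)}}=d-2\pi\mathbf{i}q^t d\check{x}$ has constant coefficients, hence vanishing curvature, so $\mathcal{L}_{(s,a,q)}^{\nabla}$ is flat and unitary for every $q$. Therefore, by the definition of an object (cf. \cite[Definition 1.1]{Fuk}), the only nontrivial requirements are that $L_{(s,a)}$ be Lagrangian for the symplectic form $\omega^{\vee}$ and that the flat connection be compatible with the B-field $B^{\vee}$; since its curvature is zero, the latter is equivalent to the pullback $B^{\vee}|_{L_{(s,a)}}$ vanishing. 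Because $\tilde{\omega}^{\vee}=\omega^{\vee}-\mathbf{i}B^{\vee}$ and both $\omega^{\vee}$, $B^{\vee}$ are real, these two requirements combine into the single condition $\tilde{\omega}^{\vee}|_{L_{(s,a)}}=0$.

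Next I would carry out the pullback computation. Parametrizing $L_{(s,a)}$ by $\check{x}$ through $\check{y}=s(\check{x})$ gives $d\check{y}=\frac{\partial s}{\partial \check{x}}(\check{x})\,d\check{x}$, so substitution into $\tilde{\omega}^{\vee}=2\pi\mathbf{i}\,d\check{x}^t(T^{-1})^t d\check{y}$ yields
\begin{equation*}
\tilde{\omega}^{\vee}|_{L_{(s,a)}}=2\pi\mathbf{i}\sum_{i,l}\left( (T^{-1})^t \frac{\partial s}{\partial \check{x}}(\check{x}) \right)_{il} dx^i\wedge dx^l .
\end{equation*}
As $dx^i\wedge dx^l$ is antisymmetric in $i$ and $l$, this $2$-form vanishes identically if and only if the matrix $(T^{-1})^t \frac{\partial s}{\partial \check{x}}(\check{x})$ is symmetric. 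Finally, writing $J:=\frac{\partial s}{\partial \check{x}}(\check{x})$ and using $\det T\neq 0$, I would conjugate: multiplying the symmetry relation $(T^{-1})^t J=J^t T^{-1}$ on the left by $T^t$ and on the right by $T$, and using $T^t(T^{-1})^t=I_n$, gives $JT=T^t J^t=(JT)^t$; the computation is reversible, so $(T^{-1})^t J$ is symmetric precisely when $JT$ is symmetric. This is the asserted condition $\frac{\partial s}{\partial \check{x}}(\check{x})T=\left( \frac{\partial s}{\partial \check{x}}(\check{x})T \right)^t$.

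The hard part will not be the algebra but pinning down the correct meaning of ``object of the Fukaya category'' in this complexified setting, namely that one must use the full complexified form $\tilde{\omega}^{\vee}$ rather than only the real symplectic form $\omega^{\vee}$. Indeed, the bare Lagrangian condition $\omega^{\vee}|_{L_{(s,a)}}=0$ yields only that $\mathrm{Im}(-(T^{-1})^t)\frac{\partial s}{\partial \check{x}}$ is symmetric, which is strictly weaker than $JT$ symmetric whenever $\mathrm{Re}\,T\neq O$; the missing datum is supplied exactly by the B-field compatibility $B^{\vee}|_{L_{(s,a)}}=0$, that is, the symmetry of $\mathrm{Re}(-(T^{-1})^t)\frac{\partial s}{\partial \check{x}}$. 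I would make the recombination of these two real symmetry conditions into the single complex one explicit. One remaining routine point is global well-definedness: the quasi-periodicity relations (\ref{lagsection}) with $a\in M(n;\mathbb{Z})$ ensure that $L_{(s,a)}$ is a closed submanifold of $\check{X}^n$ and that $\frac{\partial s}{\partial \check{x}}$ descends to $\mathbb{R}^n/\mathbb{Z}^n$, so the pointwise symmetry criterion above is consistent across overlaps; this is handled as in \cite{bijection}.
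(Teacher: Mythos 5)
Your proposal is correct and follows essentially the same route as the paper: the paper omits the proof of Proposition \ref{fukob} (deferring to \cite{bijection}), but its proof of the twisted analogue, Proposition \ref{twistedfukob}, proceeds exactly as you do — splitting the definition of an object into the Lagrangian condition $\omega^{\vee}|_{L_{(s,a)}}=0$ and the B-field compatibility $\Omega_{\nabla_{\mathcal{L}}}=-\mathbf{i}B^{\vee}|_{L_{(s,a)}}$ (which for the flat connection reduces to $B^{\vee}|_{L_{(s,a)}}=0$), pulling back via $d\check{y}=\frac{\partial s}{\partial \check{x}}d\check{x}$, and combining the two real symmetry conditions into the symmetry of $(T^{-1})^t\frac{\partial s}{\partial \check{x}}$, equivalently of $\frac{\partial s}{\partial \check{x}}T$. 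Your packaging of the two conditions as $\tilde{\omega}^{\vee}|_{L_{(s,a)}}=0$ is a harmless cosmetic variant of the same argument.
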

We denote the full subcategory of the Fukaya category over $\check{X}^n$ consisting of objects $\mathscr{L}_{(s,a,q)}^{\nabla}$ satisfying the condition $\frac{\partial s}{\partial \check{x}}(\check{x})T=\left( \frac{\partial s}{\partial \check{x}}(\check{x})T \right)^t$ by $Fuk_{\rm sub}(\check{X}^n)$.

We give an interpretation for the above discussions from the viewpoint of SYZ construction. We can regard the complexified symplectic torus $\check{X}^n$ as the trivial special Lagrangian torus fibration $\check{X}^n\rightarrow \mathbb{R}^n/\mathbb{Z}^n$, where $\check{x}$ is the local coordinate system of the base space $\mathbb{R}^n/\mathbb{Z}^n$ and $\check{y}$ is the local coordinate system of the fiber of $\check{X}^n\rightarrow \mathbb{R}^n/\mathbb{Z}^n$. Then, we can regard each affine Lagrangian submanifold $L_{(s,a)}$ in $\check{X}^n$ as the graph of the section $s : \mathbb{R}^n/\mathbb{Z}^n\rightarrow \check{X}^n$ of $\check{X}^n\rightarrow \mathbb{R}^n/\mathbb{Z}^n$.

Finally, by comparing Proposition \ref{hol} with Proposition \ref{fukob}, we immediately obtain the following proposition. In particular, this indicates that 
\begin{equation*}
E_{(s,a,q)}^{\nabla}\in DG_{X^n}
\end{equation*}
and 
\begin{equation*}
\mathscr{L}_{(s,a,q)}^{\nabla}\in Fuk_{\rm sub}(\check{X})
\end{equation*}
are mirror dual to each other.
\begin{proposition} \label{mirror}
For a given locally defined smooth function $s$ satisfying the relations \textup{(\ref{section})}, \textup{(\ref{lagsection})} and a given constant vector $q\in \mathbb{R}^n$, the connection $\nabla_{(s,a,q)}$ is an integrable connection on $E_{(s,a,q)}\to X^n$ if and only if the pair $\mathscr{L}_{(s,a,q)}^{\nabla}$ gives an object of the Fukaya category over $\check{X}^n$.
\end{proposition}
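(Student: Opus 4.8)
The plan is to obtain the statement as an immediate consequence of the two preceding propositions, which have already extracted the same algebraic constraint on $s$ from each side of the mirror correspondence. First I would invoke Proposition \ref{hol}: the connection $\nabla_{(s,a,q)}$ is an integrable connection on $E_{(s,a,q)}\to X^n$ if and only if the matrix $\frac{\partial s}{\partial x}(x)T$ is symmetric. Then I would invoke Proposition \ref{fukob}: the pair $\mathscr{L}_{(s,a,q)}^{\nabla}$ defines an object of the Fukaya category over $\check{X}^n$ if and only if $\frac{\partial s}{\partial \check{x}}(\check{x})T$ is symmetric. Note that the \emph{same} period matrix $T$ enters both conditions, so the two constraints have identical shape.

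The crux is then to recognize that these two symmetry conditions are literally one and the same. Under the SYZ construction recalled just above the statement, the fibrations $X^n\to \mathbb{R}^n/\mathbb{Z}^n$ and $\check{X}^n\to \mathbb{R}^n/\mathbb{Z}^n$ share a common base $\mathbb{R}^n/\mathbb{Z}^n$, and the two base coordinate systems $x$ and $\check{x}$ are identified under this common parametrization. Since the hypothesis furnishes a single smooth function $s$ satisfying both families of quasi-periodicity relations (\ref{section}) and (\ref{lagsection}) with the same integer matrix $a$, the Jacobians $\frac{\partial s}{\partial x}$ and $\frac{\partial s}{\partial \check{x}}$ are the same matrix written in the two coordinate names. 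Hence the condition $\frac{\partial s}{\partial x}(x)T=\left( \frac{\partial s}{\partial x}(x)T \right)^t$ and the condition $\frac{\partial s}{\partial \check{x}}(\check{x})T=\left( \frac{\partial s}{\partial \check{x}}(\check{x})T \right)^t$ coincide, and the asserted equivalence follows.

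Because both auxiliary propositions are taken as given, no computation remains and there is no genuine analytic obstacle. The only point that demands care is the bookkeeping identification of the two base coordinate systems: one must be explicit that $x$ and $\check{x}$ name the same point of the common SYZ base before matching the conditions of Proposition \ref{hol} and Proposition \ref{fukob} verbatim. I would state that identification precisely and then simply chain the two biconditionals.
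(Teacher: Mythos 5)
Your proposal is correct and follows exactly the paper's route: the paper also obtains Proposition \ref{mirror} immediately by comparing Proposition \ref{hol} with Proposition \ref{fukob}, since both reduce to the single symmetry condition on $\frac{\partial s}{\partial x}(x)T$ once $x$ and $\check{x}$ are identified as coordinates on the common SYZ base. Your explicit remark about that coordinate identification is a reasonable bit of added care but does not change the argument.
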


\subsection{A deformation $DG_{X_{\mathcal{G}_{\tau}^{\nabla}}^n}$ of $DG_{X^n}$}
In this subsection, we consider the deformation of holomorphic line bundles with integrable connections $E_{(s,a,q)}^{\nabla}$ associated to the deformation from $X^n$ to $X_{\mathcal{G}_{\tau}^{\nabla}}^n$. In particular, the main purpose of this subsection is to prove that such deformed objects naturally forms a dg-category, and this result is given in Theorem \ref{maintheorem1}.

First, as a preparations of main discussions, we quickly recall twisted line bundles with twisted connections. In general, when we consider the deformation $M_{\mathcal{G}}$ of a given complex manifold $M$ by a gerbe $\mathcal{G}$ with 0 and 1-connections, each smooth complex line bundle $L\to M$ with a connection $\nabla$ is deformed to the twisted smooth complex line bundle $L^{\mathcal{G}}\to M_{\mathcal{G}}$ with the twisted connection $\nabla^{\mathcal{G}}$. Below, we explain this fact more precisely. Let $M$ be a complex manifold and we take an open covering $\{ U_i \}_{i\in I}$ of $M$. For each smooth complex line bundle $L\to M$ with a connection $\{ \nabla_i \}_{i\in I}$ which is defined by a family of transition functions $\{ \varphi_{ij} \}_{i,j\in I}$, $\nabla_i$ and $\varphi_{ij}$ satisfy the relation
\begin{equation*}
\nabla_j-\varphi_{ij}^{-1}\nabla_i \varphi_{ij}=0
\end{equation*}
on $U_i\cap U_j$ for each $i,j\in I$. Here, we take the trivial gerbe $\mathcal{G}$ with a non-trivial 0-connection $\{ \nabla_{ij}=d+\omega_{ij} \}_{i,j\in I}$, and consider the deformation $M_{\mathcal{G}}$ of $M$ by this gerbe $\mathcal{G}$ (we only consider the case of the trivial gerbe with a non-trivial 0-connection since $\mathcal{G}_{\tau }^{\nabla}$ is trivial as a gerbe). Then, the family of transition functions $\{ \varphi_{ij} \}_{i,j\in I}$ is preserved under this deformation since $\mathcal{G}$ is trivial. This result indicates that the deformation $L^{\mathcal{G}}\to M_{\mathcal{G}}$ (as a bundle) of $L\to M$ by $\mathcal{G}$ is defined by the same family of transition functions $\{ \varphi_{ij}^{\mathcal{G}}:=\varphi_{ij} \}_{i,j\in I}$. In other words, $L^{\mathcal{G}}$ is an $id$-twisted smooth complex line bundle on $M_{\mathcal{G}}$ in the sense of \cite[Definition 2.1]{k-htwisted}\footnote{Although the twisting of $L$ by $\mathcal{G}$ is actually trivial since $\mathcal{G}$ is trivial, we regard the deformation $L^{\mathcal{G}}$ of $L$ as an $id$-twisted line bundle in order to specify that the deformation of $L$ is caused by the flat gerbe $(\mathcal{G}, \{ \nabla_{ij} \}_{i,j\in I})$.}. On the other hand, the twisted connection $\{ \nabla_i^{\mathcal{G}} \}_{i\in I}$ on the deformed object $L^{\mathcal{G}}\to M_{\mathcal{G}}$ is defined by the relation
\begin{equation}
\nabla_j^{\mathcal{G}}-\left( \varphi_{ij}^{\mathcal{G}} \right)^{-1} \nabla_i^{\mathcal{G}} \left( \varphi_{ij}^{\mathcal{G}} \right)=\omega_{ij} \label{twconn}
\end{equation}
on $U_i\cap U_j$ for each $i,j\in I$. We need to be careful when we consider the curvature form of the twisted connections $\{ \nabla_i^{\mathcal{G}} \}_{i\in I}$. Let $\{ \omega_i^{\mathcal{G}} \}_{i\in I}$ be a connection 1-form of $\{ \nabla_i^{\mathcal{G}} \}_{i\in I}$, and for each $i\in I$, we define the ``local'' curvature form\footnote{This word is employed in section 7 in \cite{k-theory}.} $\tilde{\Omega}_i^{\mathcal{G}}$ of $\{ \nabla_i^{\mathcal{G}} \}_{i\in I}$ by
\begin{equation*}
\tilde{\Omega}_i^{\mathcal{G}}=d\omega_i^{\mathcal{G}}+\omega_i^{\mathcal{G}}\wedge \omega_i^{\mathcal{G}}=d\omega_i^{\mathcal{G}}.
\end{equation*}
Unfortunately, on each intersection $U_i\cap U_j$, this ``local'' curvature form $\{ \tilde{\Omega}_i^{\mathcal{G}} \}_{i\in I}$ does not agree, namely, the relation
\begin{equation*}
\tilde{\Omega}_j^{\mathcal{G}}=\left( \varphi_{ij}^{\mathcal{G}} \right)^{-1} \tilde{\Omega}_i^{\mathcal{G}} \left( \varphi_{ij}^{\mathcal{G}} \right)+d\omega_{ij}
\end{equation*}
holds. However, when we set
\begin{equation*}
\Omega_i^{\mathcal{G}}:=\tilde{\Omega}_i^{\mathcal{G}}-B_i
\end{equation*}
on each $U_i$ by using the 1-connection $\{ B_i \}_{i\in I}$ over $\mathcal{G}$ which is compatible with the 0-connection $\{ \nabla_i^{\mathcal{G}} \}_{i\in I}$, there exists a unique 2-form $\Omega^{\mathcal{G}}$ on $M_{\mathcal{G}}$ with values in $\mathrm{End}(L^{\mathcal{G}})$ such that $\Omega^{\mathcal{G}}|_{U_i}=\Omega_i^{\mathcal{G}}$ for each $i\in I$. This 2-form $\Omega^{\mathcal{G}}$ is called the curvature form of the twisted connection $\{ \nabla_i^{\mathcal{G}} \}_{i\in I}$ (see subsection 2.3 and subsection 2.4 in \cite{k-htwisted}).

Let us discuss the deformation of smooth complex line bundles with connections $E_{(s,a,q)}^{\nabla}\to X^n$, namely, we define the $id$-twisted smooth complex line bundle $E_{(s,a,q)}^{\tau}\to X_{\mathcal{G}_{\tau }^{\nabla}}^n$ with the twisted connection $\nabla_{(s,a,q)}^{\tau}$ as the deformation of a given smooth complex line bundle with a connection $E_{(s,a,q)}^{\nabla}\to X^n$ by $\mathcal{G}_{\tau }^{\nabla}$. The transition functions of $E_{(s,a,q)}^{\tau}$ coincide with the transition functions of $E_{(s,a,q)}$ since $\mathcal{G}(I,\mathcal{O},\xi)$ is trivial. On the other hand, the connection $\nabla_{(s,a,q)}$ on $E_{(s,a,q)}$ is locally deformed to the following twisted connection $\nabla_{(s,a,q)}^{\tau}$ (see also the definition (\ref{connection})):
\begin{align*}
&\nabla_{(s,a,q)}^{\tau}=d+\omega_{(s,a,q)}^{\tau}, \\
&\omega_{(s,a,q)}^{\tau}:=\omega_{(s,a,q)}-\pi \mathbf{i}x^t \tau dx=-2\pi \mathbf{i} \Bigl( s(x)^t+q^tT \Bigr)dy-\pi \mathbf{i}x^t\tau dx. 
\end{align*}
In fact, we can easily check that this $\nabla_{(s,a,q)}^{\tau}$ is compatible with the transition functions of $E_{(s,a,q)}^{\tau}$, i.e., the relations of the form (\ref{twconn}) hold. Similarly as in the case of $E_{(s,a,q)}^{\nabla}$, we sometimes denote this deformed object by
\begin{equation*}
E_{(s,a,q)}^{\nabla}(\tau):=\Bigl( E_{(s,a,q)}^{\tau}, \ \nabla_{(s,a,q)}^{\tau} \Bigr). 
\end{equation*}
Although the transition functions are preserved under this deformation, we see that the connection 1-form $\omega_{(s,a,q)}$ is twisted by the 1-form
\begin{equation}
-\pi \mathbf{i}x^t \tau dx \label{1form}
\end{equation}
under this deformation (see also the definition (\ref{connection})). We can further obtain the 2-form
\begin{equation}
-\pi \mathbf{i}dx^t \tau dx \label{2form}
\end{equation}
from the 1-form (\ref{1form}) by considering its derivative, and this 2-form (\ref{2form}) is the 1-connection over $\mathcal{G}(I,\mathcal{O},\xi)$ itself which is compatible with the 0-connection $\nabla_{\tau}$ (see the local expression (\ref{btau}) and the definition (\ref{B})). The 2-form 
\begin{equation*}
\pi dx^t \tau dx
\end{equation*}
in the local expression (\ref{2form}) is also called a B-field as mentioned in section 4. Here, we discuss the holomorphicity of $E_{(s,a,q)}^{\nabla}(\tau)$. We see that the following proposition holds.
\begin{proposition} \label{twhol}
For a given locally defined smooth function $s$ satisfying the relation \textup{(\ref{section})} and a given constant vector $q\in \mathbb{R}^n$, the twisted connection $\nabla_{(s,a,q)}^{\tau}$ is a twisted integrable connection on $E_{(s,a,q)}^{\tau}\rightarrow X_{\mathcal{G}_{\tau }^{\nabla}}^n$ if and only if
\begin{equation}
\frac{\partial s}{\partial x}(x)T=\left( \frac{\partial s}{\partial x}(x)T \right)^t \label{symm_comp}
\end{equation}
holds, and then, the \textup{(0,2)}-part of the ``local'' curvature form $\tilde{\Omega}_{(s,a,q)}^{\tau}$ is expressed locally as
\begin{equation*}
\left( \tilde{\Omega}_{(s,a,q)}^{\tau} \right)^{(0,2)}=-\pi \mathbf{i} d\bar{z}^t ((T-\bar{T})^{-1})^t T^t \tau T(T-\bar{T})^{-1} d\bar{z}.
\end{equation*}
\end{proposition}
\begin{proof}
It is easy to check that the ``local'' curvature form $\tilde{\Omega}_{(s,a,q)}^{\tau}$ and the curvature form $\Omega_{(s,a,q)}^{\tau}$ of $\nabla_{(s,a,q)}^{\tau}$ are expressed locally as
\begin{equation}
\tilde{\Omega}_{(s,a,q)}^{\tau}=-2\pi \mathbf{i}dx^t \left( \frac{\partial s}{\partial x}(x) \right)^t dy-\pi \mathbf{i} dx^t \tau dx \label{loccurv}
\end{equation}
and
\begin{equation*}
\Omega_{(s,a,q)}^{\tau}=-2\pi \mathbf{i}dx^t \left( \frac{\partial s}{\partial x}(x) \right)^t dy,
\end{equation*}
respectively. Here, as explained in the above, note that the 2-form 
\begin{equation*}
-\pi \mathbf{i}dx^t \tau dx
\end{equation*}
in the local expression (\ref{loccurv}) is the 1-connection over $\mathcal{G}(I,\mathcal{O},\xi)$ which is compatible with the 0-connection $\nabla_{\tau}$ (a B-field). As mentioned in Remark \ref{i_c_r}, for an $\alpha$-twisted smooth complex vector bundle with a twisted connection (this $\alpha$ denotes the 2-cocycle which defines a given gerbe), we call the twisted connection a twisted integrable connection on the $\alpha$-twisted smooth complex vector bundle if the (0,2)-part of its curvature form vanishes (see \cite[Lemma 2.18, Lemma 2.19]{k-htwisted}). By direct calculations, the (0,2)-part of $\Omega_{(s,a,q)}^{\tau}$ turns out that
\begin{equation*}
\left( \Omega_{(s,a,q)}^{\tau} \right)^{(0,2)}=2\pi \mathbf{i} d\bar{z}^t ((T-\bar{T})^{-1})^t \left( \frac{\partial s}{\partial x}(x)T \right)^t (T-\bar{T})^{-1} d\bar{z}.
\end{equation*}
Thus, $\left( \Omega_{(s,a,q)}^{\tau} \right)^{(0,2)}=0$ is equivalent to that $((T-\bar{T})^{-1})^t \left( \frac{\partial s}{\partial x}(x)T \right)^t (T-\bar{T})^{-1}$ is a symmetric matrix, i.e.,
\begin{equation*}
\frac{\partial s}{\partial x}(x)T=\left( \frac{\partial s}{\partial x}(x)T \right)^t,
\end{equation*}
and then, by the local expression (\ref{loccurv}), we can easily verify that the (0,2)-part of the ``local'' curvature form $\tilde{\Omega}_{(s,a,q)}^{\tau}$ is expressed locally as
\begin{equation*}
\left( \tilde{\Omega}_{(s,a,q)}^{\tau} \right)^{(0,2)}=-\pi \mathbf{i} d\bar{z}^t ((T-\bar{T})^{-1})^t T^t \tau T(T-\bar{T})^{-1} d\bar{z}.
\end{equation*}
\end{proof}

Now, we define a dg-category $DG_{X_{\mathcal{G}_{\tau}^{\nabla}}^n}$ as follows. This definition is a natural generalization of the construction which is discussed in \cite{kajiura, exact} etc. to the twisted case. The objects are twisted holomorphic line bundles with twisted integrable connections $E_{(s,a,q)}^{\nabla}(\tau)$. For any two objects $E_{(s^1,a^1,q^1)}^{\nabla}(\tau)$ and $E_{(s^2,a^2,q^2)}^{\nabla}(\tau)$, the space of morphisms is defined by
\begin{align*}
&\mathrm{Hom}_{DG_{X_{\mathcal{G}_{\tau}^{\nabla}}^n}}\left( E_{(s^1,a^1,q^1)}^{\nabla}(\tau), E_{(s^2,a^2,q^2)}^{\nabla}(\tau) \right)=\bigoplus_{r\in \mathbb{N}\cup \{ 0 \}}\mathrm{Hom}_{DG_{X_{\mathcal{G}_{\tau}^{\nabla}}^n}}^r\left( E_{(s^1,a^1,q^1)}^{\nabla}(\tau), E_{(s^2,a^2,q^2)}^{\nabla}(\tau) \right), \\
&\mathrm{Hom}_{DG_{X_{\mathcal{G}_{\tau}^{\nabla}}^n}}^r\left( E_{(s^1,a^1,q^1)}^{\nabla}(\tau), E_{(s^2,a^2,q^2)}^{\nabla}(\tau) \right)=\Gamma \left( E_{(s^1,a^1,q^1)}^{\nabla}(\tau), E_{(s^2,a^2,q^2)}^{\nabla}(\tau) \right)\bigotimes_{C^{\infty}\left( X_{\mathcal{G}_{\tau}^{\nabla}}^n \right)} \Omega^{(0,r)}\left( X_{\mathcal{G}_{\tau}^{\nabla}}^n \right),
\end{align*}
where $\Gamma(E_{(s^1,a^1,q^1)}^{\nabla}(\tau), E_{(s^2,a^2,q^2)}^{\nabla}(\tau))$ is the space of homomorphisms from $E_{(s^1,a^1,q^1)}^{\nabla}(\tau)$ to $E_{(s^2,a^2,q^2)}^{\nabla}(\tau)$ (as twisted holomorphic line bundles with twisted integrable connections), and $\Omega^{(0,r)}(X_{\mathcal{G}_{\tau}^{\nabla}}^n)$ is the space of (0,$r$)-forms on $X_{\mathcal{G}_{\tau}^{\nabla}}^n$. This becomes a $\mathbb{Z}$-graded vector space over $\mathbb{C}$ by regarding $r\in \mathbb{N}\cup \{ 0 \}$ which is the degree of anti-holomorphic differential forms as the grading. We decompose $\nabla_{(s,a,q)}^{\tau}$ into its holomorphic part and anti-holomorphic part $\nabla_{(s,a,q)}^{\tau}=(\nabla_{(s,a,q)}^{\tau})^{(1,0)}+(\nabla_{(s,a,q)}^{\tau})^{(0,1)}$ on each $O_m^l$, and define a linear map
\begin{equation*}
d_{12}^r : \mathrm{Hom}_{DG_{X_{\mathcal{G}_{\tau}^{\nabla}}^n}}^r\left( E_{(s^1,a^1,q^1)}^{\nabla}(\tau), E_{(s^2,a^2,q^2)}^{\nabla}(\tau) \right) \to \mathrm{Hom}_{DG_{X_{\mathcal{G}_{\tau}^{\nabla}}^n}}^{r+1}\left( E_{(s^1,a^1,q^1)}^{\nabla}(\tau), E_{(s^2,a^2,q^2)}^{\nabla}(\tau) \right)
\end{equation*}
by
\begin{equation*}
d_{12}^r\left( \left\{ \phi_{(l;m)}^{12} \right\}_{(l;m)\in I} \right)=\left\{ \left( \nabla_{(s^2,a^2,q^2)}^{\tau} \right)^{(0,1)}\phi_{(l;m)}^{12}-(-1)^r \phi_{(l;m)}^{12}\left( \nabla_{(s^1,a^1,q^1)}^{\tau} \right)^{(0,1)} \right\}_{(l;m)\in I}
\end{equation*}
for each $r\in \mathbb{N}\cup \{ 0 \}$. Moreover, we define the composition
\begin{align*}
m : \ &\mathrm{Hom}_{DG_{X_{\mathcal{G}_{\tau}^{\nabla}}^n}}^{r_2}\left( E_{(s^2,a^2,q^2)}^{\nabla}(\tau), E_{(s^3,a^3,q^3)}^{\nabla}(\tau) \right)\bigotimes \mathrm{Hom}_{DG_{X_{\mathcal{G}_{\tau}^{\nabla}}^n}}^{r_1}\left( E_{(s^1,a^1,q^1)}^{\nabla}(\tau), E_{(s^2,a^2,q^2)}^{\nabla}(\tau) \right) \\
&\to \mathrm{Hom}_{DG_{X_{\mathcal{G}_{\tau}^{\nabla}}^n}}^{r_1+r_2}\left( E_{(s^1,a^1,q^1)}^{\nabla}(\tau), E_{(s^3,a^3,q^3)}^{\nabla}(\tau) \right)
\end{align*}
for each $(r_1,r_2)\in (\mathbb{N}\cup \{ 0 \})\times (\mathbb{N}\cup \{ 0 \})$ by
\begin{equation*}
m\left( \left\{ \phi_{(l;m)}^{23} \right\}_{(l;m)\in I}, \left\{ \phi_{(l;m)}^{12} \right\}_{(l;m)\in I} \right)=\left\{ \phi_{(l;m)}^{23} \wedge \phi_{(l;m)}^{12} \right\}_{(l;m)\in I}.
\end{equation*}
Then, the following theorem holds.
\begin{theo} \label{maintheorem1}
Twisted holomorphic line bundles with twisted integrable connections $E_{(s,a,q)}^{\nabla}(\tau)$ forms a dg-category $DG_{X_{\mathcal{G}_{\tau}^{\nabla}}^n}$.
\end{theo}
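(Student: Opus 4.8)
The plan is to verify the defining axioms of a dg-category for the data already specified: that each $d_{12}^r$ is a well-defined operator of degree $+1$ satisfying $d^2=0$, that the composition $m$ is associative, that $d$ and $m$ are compatible through the graded Leibniz rule, and that a degree-$0$ identity morphism exists for every object. The conceptual heart, which I would isolate first, is the following reduction: although each object $E_{(s,a,q)}^{\nabla}(\tau)$ is genuinely twisted, for any two objects the underlying smooth Hom line bundle $\mathrm{Hom}(E_{(s^1,a^1,q^1)}^{\tau}, E_{(s^2,a^2,q^2)}^{\tau})$ together with its induced connection is an \emph{honest} (untwisted) holomorphic line bundle with an integrable connection. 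Indeed, since $\mathcal{G}(I,\mathcal{O},\xi)$ is trivial as a gerbe, the transition functions of both twisted bundles coincide with those of the untwisted case, so the Hom bundle has ordinary transition functions; and since both twisted connections are deformed by the \emph{same} $0$-connection $\nabla_{\tau}$ and the \emph{same} $1$-connection (B-field) $-\mathbf{i}\,{}^{\tau}\!B_m^l$, these twisting contributions cancel in the difference that defines the connection on the Hom bundle. Consequently $\mathrm{Hom}(E_1^{\tau}, E_2^{\tau})$ carries a globally defined integrable connection in the usual sense, and the morphism complex is simply the Dolbeault-type complex of $(0,\bullet)$-forms valued in this honest Hom bundle.

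Granting this reduction, the well-definedness and degree of $d_{12}$ become immediate. The local expressions $\{(\nabla_{(s^2,a^2,q^2)}^{\tau})^{(0,1)}\phi^{12}_{(l;m)}-(-1)^r\phi^{12}_{(l;m)}(\nabla_{(s^1,a^1,q^1)}^{\tau})^{(0,1)}\}_{(l;m)\in I}$ are nothing but the $(0,1)$-part of the honest Hom connection applied to a $\mathrm{Hom}$-valued $(0,r)$-form, so they patch to a global $(0,r+1)$-form; and $d_{12}$ raises the grading by one because $(\nabla^{\tau})^{(0,1)}$ raises the anti-holomorphic form degree by one.

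The main point, and the step I expect to require the most care, is $d^2=0$. Composing $d_{12}^{r+1}\circ d_{12}^r$ produces the wedge action of the $(0,2)$-part of the curvature of the Hom connection, which locally is the difference $(\tilde{\Omega}^{\tau}_{(s^2,a^2,q^2)})^{(0,2)}-(\tilde{\Omega}^{\tau}_{(s^1,a^1,q^1)})^{(0,2)}$ of the two ``local'' curvatures. This is precisely where the twisting must be handled carefully: by Proposition \ref{twhol} each such $(0,2)$-part equals the untwisted contribution $(\Omega^{\tau}_{(s,a,q)})^{(0,2)}$, which vanishes under the integrability hypothesis, plus the common B-field term $-\pi\mathbf{i}\,d\bar{z}^t((T-\bar{T})^{-1})^t T^t\tau T(T-\bar{T})^{-1}d\bar{z}$. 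Since this B-field term is a scalar-valued $2$-form identical for both objects, it commutes with $\phi$ and cancels in the difference, while the remaining untwisted $(0,2)$-curvatures are zero by integrability. Hence $d^2=0$. This cancellation of the nonzero individual ``local'' curvatures is the genuine content of the theorem and the feature distinguishing the twisted case from the untwisted one.

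Finally, I would dispatch the remaining axioms, all of which are routine. Associativity of $m$ follows from associativity of the wedge product together with composition of bundle homomorphisms. The graded Leibniz identity $d\,m(\phi^{23},\phi^{12})=m(d\phi^{23},\phi^{12})+(-1)^{r_2}m(\phi^{23},d\phi^{12})$ follows from the fact that $(\nabla^{\tau})^{(0,1)}$ acts as a derivation with respect to $\wedge$ and the composition of homomorphisms. The degree-$0$ identity morphism $\mathrm{id}_{E_{(s,a,q)}^{\nabla}(\tau)}$ lies in $\mathrm{Hom}^0$ and is $d$-closed because it is parallel for the Hom connection. Thus the substantive work is confined to the honest-Hom-bundle reduction and the curvature cancellation, after which the dg-category axioms hold formally.
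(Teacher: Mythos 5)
Your proposal is correct and follows essentially the same route as the paper: the key cancellation you isolate --- that the scalar twisting term $-\pi \mathbf{i}x^t \tau dx$, being common to all objects, drops out of the Hom complex so that the differential and its square reduce to the untwisted integrable data --- is exactly the computation the paper performs inline when verifying $(d_{12}^r)^2=0$ and the Leibniz rule. Your packaging of this as an ``honest Hom bundle'' reduction (cancelling at the level of curvatures rather than of connection $1$-forms), together with the explicit mention of associativity and identity morphisms, is an organizational refinement rather than a different argument.
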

\begin{proof}
We first check that the linear map $d_{12}^r$ is a differential for an arbitrary fixed $r\in \mathbb{N}\cup \{ 0 \}$. For each $(l;m)\in I$, although we can locally calculate
\begin{align*}
d_{12}^r\left( \phi_{(l;m)}^{12} \right)&=\left( \nabla_{(s^2,a^2,q^2)}^{\tau} \right)^{(0,1)}\phi_{(l;m)}^{12}-(-1)^r \phi_{(l;m)}^{12}\left( \nabla_{(s^1,a^1,q^1)}^{\tau} \right)^{(0,1)} \\
&=\bar{\partial}\phi_{(l;m)}^{12}+\left( \omega_{(s^2,a^2,q^2)}^{\tau} \right)^{(0,1)}\wedge \phi_{(l;m)}^{12}-(-1)^r \phi_{(l;m)}^{12}\wedge \left( \omega_{(s^1,a^1,q^1)}^{\tau} \right)^{(0,1)},
\end{align*}
in fact, this is equal to
\begin{equation*}
\bar{\partial}\phi_{(l;m)}^{12}+\omega_{(s^2,a^2,q^2)}^{(0,1)}\wedge \phi_{(l;m)}^{12}-(-1)^r \phi_{(l;m)}^{12}\wedge \omega_{(s^1,a^1,q^1)}^{(0,1)}
\end{equation*}
since
\begin{align*}
&\Bigl( -\pi \mathbf{i}x^t \tau dx \Bigr)\wedge \phi_{(l;m)}^{12}-(-1)^r \phi_{(l;m)}^{12}\wedge \Bigl( -\pi \mathbf{i}x^t \tau dx \Bigr) \\
&=\Bigl( -\pi \mathbf{i}x^t \tau dx \Bigr)\wedge \phi_{(l;m)}^{12}-(-1)^r (-1)^r \Bigl( -\pi \mathbf{i}x^t \tau dx \Bigr)\wedge \phi_{(l;m)}^{12} \\
&=0,
\end{align*}
where $\{ \phi_{(l;m)}^{12} \}_{(l;m)\in I}\in \mathrm{Hom}_{DG_{X_{\mathcal{G}_{\tau}^{\nabla}}^n}}^r( E_{(s^1,a^1,q^1)}^{\nabla}(\tau), E_{(s^2,a^2,q^2)}^{\nabla}(\tau))$. It is easy to verify that the morphism $d_{12}^r(\{ \phi_{(l;m)}^{12} \}_{(l;m)\in I})$ is compatible with the families of transition functions of $E_{(s^1,a^1,q^1)}^{\nabla}(\tau)$ and $E_{(s^2,a^2,q^2)}^{\nabla}(\tau)$. Therefore, for each $(l;m)\in I$, $(d_{12}^r)^2 (\phi_{(l;m)}^{12})$ is expressed locally as
\begin{align}
&\left( d_{12}^r \right)^2 \left( \phi_{(l;m)}^{12} \right) \notag \\ 
&=d_{12}^r \left( \bar{\partial}\phi_{(l;m)}^{12}+\left( \omega_{(s^2,a^2,q^2)}^{\tau} \right)^{(0,1)}\wedge \phi_{(l;m)}^{12}-(-1)^r \phi_{(l;m)}^{12}\wedge \left( \omega_{(s^1,a^1,q^1)}^{\tau} \right)^{(0,1)} \right) \notag \\
&=\left( \Omega_{(s^2,a^2,q^2)}^{\tau} \right)^{(0,2)}\wedge \phi_{(l;m)}^{12}-\phi_{(l;m)}^{12}\wedge \left( \Omega_{(s^1,a^1,q^1)}^{\tau} \right)^{(0,2)}, \label{dg1}
\end{align}
and the integrability of $\nabla_{(s^1,a^1,q^1)}^{\tau}$ and $\nabla_{(s^2,a^2,q^2)}^{\tau}$ implies that the formula (\ref{dg1}) vanishes, i.e.,
\begin{equation*}
\left( d_{12}^r \right)^2 \left( \left\{ \phi_{(l;m)}^{12} \right\}_{(l;m)\in I} \right)=0.
\end{equation*}
Hence, the linear map $d_{12}^r$ is a differential for an arbitrary fixed $r\in \mathbb{N}\cup \{ 0 \}$.

Next, we check that the above differential and the composition $m$ of morphisms satisfy the Leibniz rule. For any
\begin{align*}
&\left\{ \phi_{(l;m)}^{12} \right\}_{(l;m)\in I}\in \mathrm{Hom}_{DG_{X_{\mathcal{G}_{\tau}^{\nabla}}^n}}^{r_1}\left( E_{(s^1,a^1,q^1)}^{\nabla}(\tau), E_{(s^2,a^2,q^2)}^{\nabla}(\tau) \right), \\
&\left\{ \phi_{(l;m)}^{23} \right\}_{(l;m)\in I}\in \mathrm{Hom}_{DG_{X_{\mathcal{G}_{\tau}^{\nabla}}^n}}^{r_2}\left( E_{(s^2,a^2,q^2)}^{\nabla}(\tau), E_{(s^3,a^3,q^3)}^{\nabla}(\tau) \right),
\end{align*}
it is clear that the morphism $m(\{ \phi_{(l;m)}^{23} \}_{(l;m)\in I}, \{ \phi_{(l;m)}^{12} \}_{(l;m)\in I})$ is compatible with the families of transition functions of $E_{(s^1,a^1,q^1)}^{\nabla}(\tau)$ and $E_{(s^3,a^3,q^3)}^{\nabla}(\tau)$ by the definition of $m$, and on each $O_m^l$, we see that the relation
\begin{align*}
&d_{23}^{r_2}\left( \phi_{(l;m)}^{23} \right)\wedge \phi_{(l;m)}^{12}+(-1)^{r_2}\phi_{(l;m)}^{23}\wedge d_{12}^{r_1}\left( \phi_{(l;m)}^{12} \right) \\
&=\left( \bar{\partial}\phi_{(l;m)}^{23}\wedge \phi_{(l;m)}^{12}+\left( \omega_{(s^3,a^3,q^3)}^{\tau} \right)^{(0,1)}\wedge \phi_{(l;m)}^{23}\wedge \phi_{(l;m)}^{12}-(-1)^{r_2}\phi_{(l;m)}^{23}\wedge \left( \omega_{(s^2,a^2,q^2)}^{\tau} \right)^{(0,1)}\wedge \phi_{(l;m)}^{12} \right) \\
&\hspace{4mm} +(-1)^{r_2}\left( \phi_{(l;m)}^{23}\wedge \bar{\partial}\phi_{(l;m)}^{12}+\phi_{(l;m)}^{23}\wedge \biggl( \omega_{(s^2,a^2,q^2)}^{\tau} \right)^{(0,1)}\wedge \phi_{(l;m)}^{12} \\
&\hspace{4mm} -(-1)^{r_1}\phi_{(l;m)}^{23}\wedge \phi_{(l;m)}^{12}\wedge \left( \omega_{(s^1,a^1,q^1)}^{\tau} \right)^{(0,1)} \biggr) \\
&=\bar{\partial}\left( \phi_{(l;m)}^{23}\wedge \phi_{(l;m)}^{12} \right)+\left( \omega_{(s^3,a^3,q^3)}^{\tau} \right)^{(0,1)}\wedge \left( \phi_{(l;m)}^{23}\wedge \phi_{(l;m)}^{12} \right) \\
&\hspace{4mm} -(-1)^{r_1+r_2}\left( \phi_{(l;m)}^{23}\wedge \phi_{(l;m)}^{12} \right)\wedge \left( \omega_{(s^1,a^1,q^1)}^{\tau} \right)^{(0,1)} \\
&=d_{13}^{r_1+r_2}\left( \phi_{(l;m)}^{23}\wedge \phi_{(l;m)}^{12} \right)
\end{align*}
holds. This relation states that the differential and the product structure $m$ satisfy the Leibniz rule.

Thus, indeed, this $DG_{X_{\mathcal{G}_{\tau}^{\nabla}}^n}$ forms a dg-category.
\end{proof}
As a corollary of Theorem \ref{maintheorem1}, we see that holomorphic line bundles with integrable connections $E_{(s,a,q)}^{\nabla}$ on $X^n$ forms a dg-category $DG_{X^n}:=DG_{X_{\mathcal{G}_O^{\nabla}}^n}$ when we consider the case that $\mathcal{G}_{\tau}^{\nabla}$ is trivial, i.e., $\tau=O$, and actually, such a dg-category is used in various studies (see \cite{kajiura, exact} etc.). We can regard the dg-category $DG_{X_{\mathcal{G}_{\tau}^{\nabla}}^n}$ as the deformation of the dg-category $DG_{X^n}$ associated to the deformation from $X^n$ to $X_{\mathcal{G}_{\tau}^{\nabla}}^n$ by $\mathcal{G}_{\tau}^{\nabla}$, this can also be understood as the equivalence
\begin{equation}
E_{(0,0,0)}^{\nabla}(\tau)\otimes - : DG_{X^n}\stackrel{\sim}{\to} DG_{X_{\mathcal{G}_{\tau}^{\nabla}}^n}. \label{dgequiv}
\end{equation}
In particular, for a given algebraic variety $X$ over $\mathbb{C}$, roughly speaking, if $\alpha$ is an element in the Brauer group of $X$, there exists an $\alpha$-twisted vector bundle $\mathcal{E}_{\alpha}\to X$ and we can consider the sheaf of Azumaya algebras $\mathcal{A}:=\mathrm{End}(\mathcal{E}_{\alpha})$. Then, it is known that there exists an equivalence between the abelian category $Coh(X,\alpha)$ of $\alpha$-twisted sheaves on $X$ and the abelian category $Coh(X,\mathcal{A})$ of right coherent $\mathcal{A}$-modules on $X$:
\begin{equation}
\mathrm{Hom}(\mathcal{E}_{\alpha},-) : Coh(X,\alpha)\stackrel{\sim}{\to} Coh(X,\mathcal{A}). \label{azumaya}
\end{equation}
These facts are explained in \cite{cal} for example (cf. \cite{kap1, kap2}). In this paper, although we do not assume that $X^n$ is an abelian variety (we regard $X^n$ as a general complex torus), it seems that the deformation (\ref{dgequiv}) can also be interpreted as an analogue of the equivalence (\ref{azumaya}).

\subsection{A problem}
The goal of subsection 5.3 and subsection 5.4 is to construct the deformation of objects $\mathscr{L}_{(s,a,q)}^{\nabla}\in Fuk_{\rm sub}(\check{X}^n)$ associated to the deformation from $\check{X}^n$ to $\check{X}_{\mathcal{G}_{\tau}^{\nabla}}^n$. In particular, we need to consider the deformation of unitary local systems $\mathcal{L}_{(s,a,q)}^{\nabla}$ in order to achieve this goal. However, when we discuss the deformation of objects $\mathscr{L}_{(s,a,q)}^{\nabla}\in Fuk_{\rm sub}(\check{X}^n)$, in general, there exists an alternating matrix $\tau \in M(n;\mathbb{R})$ such that any deformations of $\mathcal{L}_{(s,a,q)}^{\nabla}$ cannot be constructed in the framework of the usual Fukaya categories. The purposes of this subsection are to explain this problem, and to give the outline of an idea to solve it.

Let us again recall the usual definition of objects of the Fukaya category over a given symplectic manifold $M$ with a B-field $2\pi B$. An object of the Fukaya category over this complexified symplectic manifold consists of a Lagrangian submanifold $L$ in $M$ and a smooth complex line bundle $\mathcal{L}\to L$ with a unitary connection $\nabla_{\mathcal{L}}$ such that
\begin{equation}
\Omega_{\nabla_{\mathcal{L}}}=\Bigl. -2\pi \mathbf{i}B \Bigr|_L, \label{usualdef}
\end{equation}
where $\Omega_{\nabla_{\mathcal{L}}}$ denotes the curvature form of $\nabla_{\mathcal{L}}$. In particular, there exists such a unitary connection $\nabla_{\mathcal{L}}$ if and only if $[B]\in H^2(L;\mathbb{Z})$, namely, there does not exist a unitary connection $\nabla_{\mathcal{L}}$ which satisfies the condition (\ref{usualdef}) on each Lagrangian submanifold $L$ such that $[B]\not \in H^2(L;\mathbb{Z})$.

For later convenience, we prepare some notations. Note that $\tau \in M(n;\mathbb{R})$ is an alternating matrix. For each component $\tau_{ij}$ of $\tau \in M(n;\mathbb{R})$ such that $1\leq i<j\leq n$, we set
\begin{equation*}
\lfloor \tau_{ij} \rfloor :=\mathrm{max}\Bigl\{ m\in \mathbb{Z} \ | \ m\leq \tau_{ij} \Bigr\},
\end{equation*}
and let us define two alternating matrices $\tau_{\mathbb{Z}}\in M(n;\mathbb{Z})$ and $\tau_{\rm frac}\in M(n;\mathbb{R})$ by
\begin{align*}
&(\tau_{\mathbb{Z}})_{ij}=\lfloor \tau_{ij} \rfloor, \ (\tau_{\mathbb{Z}})_{ji}=-\lfloor \tau_{ij} \rfloor, \\
&(\tau_{\rm frac})_{ij}=\tau_{ij}-\lfloor \tau_{ij} \rfloor, \ (\tau_{\rm frac})_{ji}=-\tau_{ij}+\lfloor \tau_{ij} \rfloor,
\end{align*}
respectively. Then, we have
\begin{equation*}
\tau=\tau_{\mathbb{Z}}+\tau_{\rm frac}.
\end{equation*}
By using these notations, the B-field $B_{\tau}^{\vee}$ on $\check{X}_{\mathcal{G}_{\tau}^{\nabla}}^n$ is expressed locally as
\begin{equation*}
B_{\tau}^{\vee}=B^{\vee}+\pi d\check{x}^t \tau d\check{x}=B^{\vee}+\pi d\check{x}^t \tau_{\mathbb{Z}} d\check{x} +\pi d\check{x}^t \tau_{\rm frac} d\check{x}.
\end{equation*}

Here, we consider the case $\tau \in M(n;\mathbb{Z})$, i.e., $\tau_{\rm frac}=O$. Then, for the B-field $B^{\vee}+\pi d\check{x}^t \tau_{\mathbb{Z}} d\check{x}$, since 
\begin{equation*}
\Bigl.B^{\vee} \Bigr|_{L_{(s,a)}}=2\pi d\check{x}^t B_{\rm mat}^{\vee} \frac{\partial s}{\partial \check{x}}(\check{x}) d\check{x},
\end{equation*}
if we take the Lagrangian submanifold $L_{(s,a)}$ in $(\mathbb{R}^{2n}/\mathbb{Z}^{2n}, \omega^{\vee})$ which is determined by each $s$ satisfying the relation
\begin{equation*}
B_{\rm mat}^{\vee} \frac{\partial s}{\partial \check{x}}(\check{x})=\left( B_{\rm mat}^{\vee} \frac{\partial s}{\partial \check{x}}(\check{x}) \right)^t,
\end{equation*}
then the requirement (\ref{usualdef}) is satisfied:\footnote{It is natural to consider that $L_{(s,a)}$ is not deformed since $\omega^{\vee}$ is preserved in the deformation from $\check{X}^n$ to $\check{X}_{\mathcal{G}_{\tau}^{\nabla}}^n$.}
\begin{equation*}
\left[ d\check{x}^t B_{\rm mat}^{\vee} d\check{y}+\frac{1}{2}d\check{x}^t \tau_{\mathbb{Z}} d\check{x} \right]=\left[ \frac{1}{2}d\check{x}^t \tau_{\mathbb{Z}} d\check{x} \right]\in H^2(L_{(s,a)};\mathbb{Z}).
\end{equation*}
Hence, when we focus on such Lagrangian submanifolds $L_{(s,a)}$, we can expect that the deformation of objects $\mathscr{L}_{(s,a,q)}^{\nabla}\in Fuk_{\rm sub}(\check{X}^n)$ can be constructed in the framework of the usual Fukaya categories.

However, in the case $\tau \not \in M(n;\mathbb{Z})$, i.e., $\tau_{\rm frac}\not=O$, for the B-field $B^{\vee}+\pi d\check{x}^t \tau d\check{x}$, it is easy to check that
\begin{equation*}
\left[ d\check{x}^t B_{\rm mat}^{\vee} d\check{y}+\frac{1}{2}d\check{x}^t \tau d\check{x} \right]\not \in H^2(L_{(s,a)};\mathbb{Z})
\end{equation*}
holds for any Lagrangian submanifold $L_{(s,a)}$ in $(\mathbb{R}^{2n}/\mathbb{Z}^{2n}, \omega^{\vee})$ even though we can consider the deformation $E_{(s,a,q)}^{\nabla}(\tau)$ of each $E_{(s,a,q)}^{\nabla}$ depending on $\tau \not \in M(n;\mathbb{Z})$. Thus, we can not discuss the deformation of objects $\mathscr{L}_{(s,a,q)}^{\nabla}\in Fuk_{\rm sub}(\check{X}^n)$ in the usual framework over $\check{X}_{\mathcal{G}_{\tau}^{\nabla}}^n$ under the assumption $\tau \not \in M(n;\mathbb{Z})$.

Our proposal to solve this problem is to consider the twisting of the complexified symplectic torus $(\mathbb{R}^{2n}/\mathbb{Z}^{2n}, \omega^{\vee}-\mathbf{i}(B^{\vee}+\pi d\check{x}^t \tau_{\mathbb{Z}} d\check{x}))$ by the flat gerbe $\check{\mathcal{G}}_{\tau_{\rm frac}}^{\nabla}$ associated to $\tau_{\rm frac}\in M(n;\mathbb{R})$. In other words, our idea is to employ ``twisted'' line bundles associated to the flat gerbe $\check{\mathcal{G}}_{\tau_{\rm frac}}^{\nabla}$ instead of ``usual'' line bundles on Lagrangian submanifolds. Strictly speaking, we do not discuss the compatibility of our proposal and some structures required to define (an analogue of) the Fukaya category over $\check{X}_{\check{\mathcal{G}}_{\tau}^{\nabla}}^n$, so in this sense, our proposal is just an idea.

\subsection{A deformation of objects of $Fuk_{\rm sub}(\check{X}^n)$ and the homological mirror symmetry}
The purpose of this subsection is to explain the details of the proposal which is mentioned in subsection 5.3, namely, to discuss the deformation of objects $\mathscr{L}_{(s,a,q)}^{\nabla}\in Fuk_{\rm sub}(\check{X}^n)$ which is compatible with the counterpart in the mirror dual complex geometry side.

Let us first define a flat gerbe $\check{\mathcal{G}}_{\tau_{\rm frac}}^{\nabla}$ over the complexified symplectic torus 
\begin{equation*}
\check{X}^n(\tau_{\mathbb{Z}}):=\Bigl( \mathbb{R}^{2n}/\mathbb{Z}^{2n}, \ \omega^{\vee}-\mathbf{i}(B^{\vee}+\pi d\check{x}^t \tau_{\mathbb{Z}} d\check{x}) \Bigr)
\end{equation*}
depending on $\tau_{\rm frac}\in M(n;\mathbb{R})$ as follows. We fix an $\epsilon >0$ small enough and take an open covering $\{ \check{O}_m^l:=\check{O}_{m_1\cdots m_n}^{l_1\cdots l_n} \}_{(l;m)\in I}$ of $\check{X}^n$ which is obtained by replacing $\left( \begin{array}{cc} x \\ y \end{array} \right)$ with $\left( \begin{array}{cc} \check{x} \\ \check{y} \end{array} \right)$, i.e.,
\begin{align*}
\check{O}_{m_1\cdots m_n}^{l_1\cdots l_n}:=\biggl\{ \left( \begin{array}{ccc}\check{x} \\ \check{y} \end{array} \right)\in \check{X}^n \ \Bigl. \Bigr| \ &\frac{l_j-1}{3}-\epsilon <x^j <\frac{l_j}{3}+\epsilon, \\
&\frac{m_k-1}{3}-\epsilon <y^k <\frac{m_k}{3}+\epsilon, \ j,k=1,\cdots, n \biggr\},
\end{align*}
and in particular, we can also regard this $\{ \check{O}_m^l \}_{(l;m)\in I}$ as an open covering of $\check{X}^n(\tau_{\mathbb{Z}})$. Let us consider the trivial gerbe $\check{\mathcal{G}}(I,\mathcal{O},\xi)$ on $\check{X}^n(\tau_{\mathbb{Z}})$ (similarly as in the case of $\mathcal{G}(I,\mathcal{O},\xi)$, we use the notations $\mathcal{O}$ and $\xi$ here):
\begin{align*}
&\mathcal{O}:=\Bigl\{ \mathcal{O}_{mm'}^{ll'}\to \check{O}_{mm'}^{ll'} \Bigr\}_{(l;m),(l';m')\in I}, \\ &\xi:=\Bigl\{ \xi_{mm'm''}^{ll'l''}=1\in \Gamma(\check{O}_{mm'm''}^{ll'l''};\mathcal{O}_{mm'm''}^{ll'l''}) \Bigr\}_{(l;m),(l';m'),(l'';m'')\in I}.
\end{align*}
Similarly, a 0-connection $\check{\nabla}_{\tau_{\rm frac}}:=\{ ^{\tau_{\rm frac}}\!\check{\nabla}_{mm'}^{ll'}:=d-\mathbf{i}^{\tau_{\rm frac}}\!\check{\omega}_{mm'}^{ll'} \}_{(l;m),(l';m')\in I}$ over $\check{\mathcal{G}}(I,\mathcal{O},\xi)$ is also defined by replacing $\tau$ and $x$ with $\tau_{\rm frac}$ and $\check{x}$, respectively, namely, a 1-form $^{\tau_{\rm frac}}\!\check{\omega}_{mm'}^{ll'}$ on the open set $\check{O}_{mm'}^{ll'}$ is defined by
\begin{equation*}
^{\tau_{\rm frac}}\!\check{\omega}_{mm'}^{ll'}=
\begin{cases}
\pi (\tau_{\rm frac})_j^t d\check{x} & (l_j=1, l_j'=3, l_h=l_h' (h\not=j), m=m') \\ 
\pi (\tau_{\rm frac})_j^t d\check{x} & (l_j=1, l_j'=3, l_h=l_h' (h\not=j), m_k=1, m_k'=3, m_i=m_i' (i\not=k)) \\ 
\pi (\tau_{\rm frac})_j^t d\check{x} & (l_j=1, l_j'=3, l_h=l_h' (h\not=j), m_k=3, m_k'=1, m_i=m_i' (i\not=k)) \\ 
0 & (\mathrm{otherwise}),
\end{cases}
\end{equation*}
where 
\begin{equation*}
(\tau_{\rm frac})_j:=\left( \begin{array}{ccc} (\tau_{\rm frac})_{j1} \\ \vdots \\ (\tau_{\rm frac})_{jn} \end{array} \right)\in \mathbb{R}^n,
\end{equation*}
$j$, $k=1$, $\cdots$, $n$, and we assume the condition $^{\tau_{\rm frac}}\!\check{\omega}_{m'm}^{l'l}=-^{\tau_{\rm frac}}\!\check{\omega}_{mm'}^{ll'}$. Moreover, although we drop the index $(l;m)\in I$ for simplicity, the family
\begin{equation*}
\Bigl\{ -\pi \mathbf{i}d\check{x}^t \tau_{\rm frac} d\check{x} \Bigr\}_{(l;m)\in I}
\end{equation*}
gives a 1-connection over $\check{\mathcal{G}}(I,\mathcal{O},\xi)$ which is compatible with the 0-connection $\check{\nabla}_{\tau_{\rm frac}}$. We denote the trivial gerbe $\check{\mathcal{G}}(I,\mathcal{O},\xi)$ with the flat connection $(\check{\nabla}_{\tau_{\rm frac}},\{ -\pi \mathbf{i}d\check{x}^t \tau_{\rm frac} d\check{x} \}_{(l;m)\in I})$ by $\check{\mathcal{G}}_{\tau_{\rm frac}}^{\nabla}$:
\begin{equation*}
\check{\mathcal{G}}_{\tau_{\rm frac}}^{\nabla}:=\Bigl( \check{\mathcal{G}}(I,\mathcal{O},\xi), \ \check{\nabla}_{\tau_{\rm frac}}, \ \Bigl\{ -\pi \mathbf{i}d\check{x}^t \tau_{\rm frac} d\check{x} \Bigr\}_{(l;m)\in I} \Bigr).
\end{equation*}
By using this flat gerbe $\check{\mathcal{G}}_{\tau_{\rm frac}}^{\nabla}$, similarly as in the deformation $X_{\mathcal{G}_{\tau}^{\nabla}}^n$ of $X^n$ by $\mathcal{G}_{\tau}^{\nabla}$, we can consider the deformation $\check{X}^n(\tau_{\mathbb{Z}})_{\check{\mathcal{G}}_{\tau_{\rm frac}}^{\nabla}}$ of $\check{X}^n(\tau_{\mathbb{Z}})$ by $\check{\mathcal{G}}_{\tau_{\rm frac}}^{\nabla}$, and $\check{X}_{\mathcal{G}_{\tau}^{\nabla}}^n$ defined in section 4 can be identified with this $\check{X}^n(\tau_{\mathbb{Z}})_{\check{\mathcal{G}}_{\tau_{\rm frac}}^{\nabla}}$. By concerning this fact, hereafter, we treat $\check{X}^n(\tau_{\mathbb{Z}})_{\check{\mathcal{G}}_{\tau_{\rm frac}}^{\nabla}}$ as a mirror partner of $X_{\mathcal{G}_{\tau}^{\nabla}}^n$: 
\begin{equation*}
\check{X}_{\mathcal{G}_{\tau}^{\nabla}}^n=\check{X}^n(\tau_{\mathbb{Z}})_{\check{\mathcal{G}}_{\tau_{\rm frac}}^{\nabla}}. 
\end{equation*}

Let us discuss the deformation of a given arbitrary object $\mathscr{L}_{(s,a,q)}^{\nabla}\in Fuk_{\rm sub}(\check{X}^n)$ associated to the deformation from $\check{X}^n$ to $\check{X}_{\mathcal{G}_{\tau}^{\nabla}}^n$. It is natural to consider that $L_{(s,a)}$ is not deformed and $\mathcal{L}_{(s,a,q)}^{\nabla}$ is only deformed depending on $\tau$ since $\omega^{\vee}$ is preserved and $B^{\vee}$ is twisted by using $\tau$ in the deformation from $\check{X}^n$ to $\check{X}_{\mathcal{G}_{\tau}^{\nabla}}^n$. Therefore, as candidates of Lagrangian submanifolds, we focus on $n$-dimensional submanifolds $L_{(s,a)}$ in $\check{X}_{\mathcal{G}_{\tau}^{\nabla}}^n$ similarly as in the case of $Fuk_{\rm sub}(\check{X}^n)$. Below, we construct the deformation of $\mathcal{L}_{(s,a,q)}^{\nabla}$. Let us consider an $id$-twisted smooth complex line bundle $\mathcal{L}_{(s,a,q)}^{\tau}\to L_{(s,a)}$ which is defined by a transition function expressed locally as 
\begin{equation*}
\psi \left( \begin{array}{cccccccc} x^1 \\ \vdots \\ x^i+1 \\ \vdots \\ x^n \\ s^1(\check{x})+a_{1i} \\ \vdots \\ s^n(\check{x})+a_{ni} \end{array} \right)=\mathrm{exp}\left( \pi \mathbf{i} (\tau_{\mathbb{Z}})_i^t \check{x} \right) \cdot \psi \left( \begin{array}{cccccccc} x^1 \\ \vdots \\ x^i \\ \vdots \\ x^n \\ s^1(\check{x}) \\ \vdots \\ s^n(\check{x}) \end{array} \right)
\end{equation*}
for each $i=1$, $\cdots$, $n$, where $\psi$ is a locally defined smooth section of $\mathcal{L}_{(s,a,q)}^{\tau}$, and
\begin{equation*}
(\tau_{\mathbb{Z}})_i:=\left( \begin{array}{ccc} (\tau_{\mathbb{Z}})_{i1} \\ \vdots \\ (\tau_{\mathbb{Z}})_{in} \end{array} \right)\in \mathbb{Z}^n.
\end{equation*}
We further consider a twisted connection on $\mathcal{L}_{(s,a,q)}^{\tau}$ which is expressed locally as
\begin{equation*}
\nabla_{\mathcal{L}_{(s,a,q)}^{\tau}}:=d-\pi \mathbf{i} \check{x}^t \tau_{\mathbb{Z}} d\check{x}-2\pi \mathbf{i} q^t d\check{x}-\pi \mathbf{i}\check{x}^t \tau_{\rm frac} d\check{x},
\end{equation*}
where 
\begin{equation*}
q:=\left( \begin{array}{ccc} q_1 \\ \vdots \\ q_n \end{array} \right)\in \mathbb{R}^n
\end{equation*}
is a constant vector, and $-\pi \mathbf{i}\check{x}^t \tau_{\rm frac} d\check{x}$ comes from the deformation of $\check{X}^n(\tau_{\mathbb{Z}})$ by $\mathcal{G}_{\tau_{\rm frac}}^{\nabla}$. It is easy to check that the local expression of the curvature form $\Omega_{\nabla_{\mathcal{L}_{(s,a,q)}^{\tau}}}:=\nabla_{\mathcal{L}_{(s,a,q)}^{\tau}}\circ \nabla_{\mathcal{L}_{(s,a,q)}^{\tau}}$ of $\nabla_{\mathcal{L}_{(s,a,q)}^{\tau}}$ is given by
\begin{equation*}
\Omega_{\nabla_{\mathcal{L}_{(s,a,q)}^{\tau}}}=-\pi \mathbf{i} d\check{x}^t \tau_{\mathbb{Z}} d\check{x}.
\end{equation*}
At least, this implies that $\tau_{\mathbb{Z}} \in M(n;\mathbb{Z})$ determines the first Chern character $ch_1(\mathcal{L}_{(s,a,q)}^{\tau})$ of $\mathcal{L}_{(s,a,q)}^{\tau}$ in the case $\tau\in M(n;\mathbb{Z})$, i.e., $\tau_{\rm frac}=O$:
\begin{equation*}
ch_1(\mathcal{L}_{(s,a,q)}^{\tau})=\frac{1}{2}d\check{x}^t \tau_{\mathbb{Z}} d\check{x}.
\end{equation*}
Hereafter, for an $id$-twisted smooth complex line bundle $\mathcal{L}_{(s,a,q)}^{\tau}$ and a twisted connection $\nabla_{\mathcal{L}_{(s,a,q)}^{\tau}}$ on $\mathcal{L}_{(s,a,q)}^{\tau}$, we set
\begin{equation*}
\mathcal{L}_{(s,a,q)}^{\nabla}(\tau):=\Bigl( \mathcal{L}_{(s,a,q)}^{\tau}, \ \nabla_{\mathcal{L}_{(s,a,q)}^{\tau}} \Bigr),
\end{equation*}
and let us denote a pair of an $n$-dimensional submanifold $L_{(s,a)}$ in $\check{X}_{\mathcal{G}_{\tau}^{\nabla}}^n$ and an $id$-twisted smooth complex line bundle with a twisted connection $\mathcal{L}_{(s,a,q)}^{\nabla}(\tau)$ by $\mathscr{L}_{(s,a,q)}^{\nabla}(\tau)$: 
\begin{equation*}
\mathscr{L}_{(s,a,q)}^{\nabla}(\tau):=\Bigl( L_{(s,a)}, \ \mathcal{L}_{(s,a,q)}^{\nabla}(\tau) \Bigr).
\end{equation*}
Then, we obtain the following.
\begin{proposition} \label{generalization}
For a given locally defined smooth function $s$ satisfying the relations \textup{(\ref{lagsection})} and a given constant vector $q\in \mathbb{R}^n$, the pair $\mathscr{L}_{(s,a,q)}^{\nabla}(\tau)$ satisfies the condition
\begin{equation}
L_{(s,a)} \ \mathrm{becomes} \ \mathrm{a} \ \mathrm{Lagrangian} \ \mathrm{submanifold} \ \mathrm{in} \ \check{X}_{\mathcal{G}_{\tau}^{\nabla}}^n \label{general1} 
\end{equation}
and the condition
\begin{equation}
\mathcal{L}_{(s,a,q)}^{\tau}\to L_{(s,a)} \ \mathrm{with} \ \nabla_{\mathcal{L}_{(s,a,q)}^{\tau}} \ \mathrm{satisfies} \ \tilde{\Omega}_{\nabla_{\mathcal{L}_{(s,a,q)}^{\tau}}}=\Bigl. -\mathbf{i}B_{\tau}^{\vee}\Bigr |_{L_{(s,a)}} \label{general2} 
\end{equation}
if and only if 
\begin{equation}
\frac{\partial s}{\partial \check{x}}(\check{x})T=\left( \frac{\partial s}{\partial \check{x}}(\check{x})T \right)^t. \label{symm_symp}
\end{equation}
holds, where $\tilde{\Omega}_{\nabla_{\mathcal{L}_{(s,a,q)}^{\tau}}}$ denotes the ``local'' curvature form of $\nabla_{\mathcal{L}_{(s,a,q)}^{\tau}}$.
\end{proposition}
\begin{proof}
We first consider the condition (\ref{general1}). By direct calculations, we see
\begin{align*}
\Bigl. \omega^{\vee} \Bigr |_{L_{(s,a)}}&=\Bigl. 2\pi d\check{x}^t \omega_{\rm mat}^{\vee} d\check{y} \Bigr |_{L_{(s,a)}} \\
&=2\pi d\check{x}^t \omega_{\rm mat}^{\vee} \frac{\partial s}{\partial \check{x}}(\check{x}) d\check{x},
\end{align*}
and this implies that $L_{(s,a)}$ becomes a Lagrangian submanifold in $\check{X}_{\mathcal{G}_{\tau}^{\nabla}}^n$, i.e., $\omega^{\vee}|_{L_{(s,a)}}=0$ holds if and only if
\begin{equation}
\omega_{\rm mat}^{\vee}\frac{\partial s}{\partial \check{x}}(\check{x})=\left( \omega_{\rm mat}^{\vee}\frac{\partial s}{\partial \check{x}}(\check{x}) \right)^t \label{twistedlag}
\end{equation}
holds. Next, we consider the condition (\ref{general2}). Note that $\tilde{\Omega}_{\nabla_{\mathcal{L}_{(s,a,q)}^{\tau}}}$ is expressed locally as
\begin{equation*}
\tilde{\Omega}_{\nabla_{\mathcal{L}_{(s,a,q)}^{\tau}}}=-\pi \mathbf{i}d\check{x}^t \tau_{\mathbb{Z}} d\check{x}-\pi \mathbf{i}d\check{x}^t \tau_{\rm frac} d\check{x}=-\pi \mathbf{i} d\check{x}^t \tau d\check{x}.
\end{equation*}
Hence, the restriction of $-\mathbf{i}B_{\tau}^{\vee}$ to $L_{(s,a)}$ turns out to be
\begin{align*}
\Bigl. -\mathbf{i}B_{\tau}^{\vee} \Bigr |_{L_{(s,a)}}&=\Bigl. -2\pi \mathbf{i}d\check{x}^t B_{\rm mat}^{\vee} d\check{y}-\pi \mathbf{i}d\check{x}^t \tau d\check{x} \Bigr |_{L_{(s,a)}} \\
&=-2\pi \mathbf{i}d\check{x}^t B_{\rm mat}^{\vee} \frac{\partial s}{\partial \check{x}}(\check{x}) d\check{x}-\pi \mathbf{i} d\check{x}^t \tau d\check{x} \\
&=-2\pi \mathbf{i}d\check{x}^t B_{\rm mat}^{\vee} \frac{\partial s}{\partial \check{x}}(\check{x}) d\check{x}+\tilde{\Omega}_{\nabla_{\mathcal{L}_{(s,a,q)}^{\tau}}},
\end{align*}
so the relation $\tilde{\Omega}_{\nabla_{\mathcal{L}_{(s,a,q)}^{\tau}}}=-\mathbf{i}B_{\tau}^{\vee}|_{L_{(s,a)}}$ holds if and only if 
\begin{equation}
B_{\rm mat}^{\vee}\frac{\partial s}{\partial \check{x}}(\check{x})=\left( B_{\rm mat}^{\vee}\frac{\partial s}{\partial \check{x}}(\check{x}) \right)^t \label{twistedb'}
\end{equation}
holds. The statement follows from the relation (\ref{twistedlag}) and the relation (\ref{twistedb'}).
\end{proof}
\begin{rem}
In the case $\tau \in M(n;\mathbb{Z})$, i.e., $\tau_{\rm frac}=O$, the ``local'' curvature form $\tilde{\Omega}_{\nabla_{\mathcal{L}_{(s,a,q)}^{\tau}}}$ coincides with the curvature form $\Omega_{\nabla_{\mathcal{L}_{(s,a,q)}^{\tau}}}$ since $\check{\mathcal{G}}_{\tau_{\rm frac}}^{\nabla}$ is trivial. Thus, in this case, the conditions \textup{(\ref{general1})}, \textup{(\ref{general2})} reproduce the definition of objects of the usual Fukaya categories.
\end{rem}
By comparing Proposition \ref{twhol} with Proposition \ref{generalization}, we immediately obtain the following.
\begin{proposition} \label{generalizationmirror}
For a given locally defined smooth function $s$ satisfying the relations \textup{(\ref{section})}, \textup{(\ref{lagsection})} and a given constant vector $q\in \mathbb{R}^n$, the twisted connection $\nabla_{(s,a,q)}^{\tau}$ is a twisted integrable connection on $E_{(s,a,q)}^{\tau}\to X_{\mathcal{G}_{\tau}^{\nabla}}^n$ if and only if the pair $\mathscr{L}_{(s,a,q)}^{\nabla}(\tau)$ satisfies the conditions \textup{(\ref{general1})}, \textup{(\ref{general2})}.
\end{proposition}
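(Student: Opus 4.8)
The plan is to obtain this statement in exactly the way Proposition \ref{mainth} was obtained from Proposition \ref{twhol} and Proposition \ref{twistedfukob}, namely by directly comparing the necessary-and-sufficient conditions furnished by the two sides. First I would invoke Proposition \ref{twhol}, which tells us that under the relations (\ref{section}) the twisted connection $\nabla_{(s,a,q)}^{\tau}$ is a twisted integrable connection on $E_{(s,a,q)}^{\tau}\to X_{\mathcal{G}_{\tau}^{\nabla}}^n$ if and only if
\begin{equation*}
\frac{\partial s}{\partial x}(x)T=\left( \frac{\partial s}{\partial x}(x)T \right)^t.
\end{equation*}

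Next I would invoke Proposition \ref{generalization}, which says that under the relations (\ref{lagsection}) the pair $\mathscr{L}_{(s,a,q)}^{\nabla}(\tau)$ satisfies the condition (\ref{general1}) together with the condition (\ref{general2}) if and only if
\begin{equation*}
\frac{\partial s}{\partial \check{x}}(\check{x})T=\left( \frac{\partial s}{\partial \check{x}}(\check{x})T \right)^t.
\end{equation*}
Since the base coordinate $x$ on $X^n$ and the base coordinate $\check{x}$ on $\check{X}^n$ play identical roles in the SYZ picture, both serving as the coordinate on the common base $\mathbb{R}^n/\mathbb{Z}^n$, and since $s$ is assumed to satisfy both (\ref{section}) and (\ref{lagsection}) with the same $a\in M(n;\mathbb{Z})$, these two matrix-symmetry conditions are literally the same equation. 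Chaining the two equivalences through this common condition then yields the desired equivalence at once.

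The proof is thus immediate once Proposition \ref{twhol} and Proposition \ref{generalization} are in hand, and I do not expect any genuine obstacle at this stage: all the analytic content, namely the computation of the $(0,2)$-part of the twisted curvature on the complex side and the restriction of $\omega^{\vee}$ and $-\mathbf{i}B_{\tau}^{\vee}$ to $L_{(s,a)}$ on the symplectic side, has already been carried out in those two propositions. The only point worth emphasizing is conceptual rather than computational: in contrast to Proposition \ref{mainth}, the symplectic-side condition here is phrased via the ``local'' curvature $\tilde{\Omega}_{\nabla_{\mathcal{L}_{(s,a,q)}^{\tau}}}$ of the twisted line bundle $\mathcal{L}_{(s,a,q)}^{\tau}$ rather than via an ordinary curvature form, and this replacement is precisely what is forced by the obstruction
\begin{equation*}
\left[ \sum_{1\leq i<j \leq n}\tau_{ij}dx^i\wedge dx^j \right]\not\in H^2(L_{(s,a)};\mathbb{Z})
\end{equation*}
discussed in subsection 6.1. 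Consequently the careful bookkeeping I would watch for is not in the calculation but in the wording: the statement asserts equivalence of the two \emph{conditions} (\ref{general1}) and (\ref{general2}) rather than the existence of an honest Fukaya object, and the proof should respect this distinction.
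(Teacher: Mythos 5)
Your proposal is correct and is exactly the paper's argument: the paper obtains Proposition \ref{generalizationmirror} immediately by comparing Proposition \ref{twhol} with Proposition \ref{generalization}, the two equivalent conditions being the same symmetry relation $\frac{\partial s}{\partial x}(x)T=\left( \frac{\partial s}{\partial x}(x)T \right)^t$ on the common base coordinate. Your closing remark about the condition being phrased via the ``local'' curvature form rather than an honest Fukaya object also matches the paper's own caveat.
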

Clearly, Proposition \ref{generalizationmirror} indicates that
\begin{equation*}
E_{(s,a,q)}^{\nabla}(\tau)\in DG_{X_{\mathcal{G}_{\tau}^{\nabla}}^n}
\end{equation*}
and 
\begin{equation*}
\mathscr{L}_{(s,a,q)}^{\nabla}(\tau)
\end{equation*}
are mirror dual to each other under the conditions (\ref{symm_comp}), (\ref{symm_symp}). Moreover, when $E_{(s,a,q)}^{\nabla}(\tau)$ and $\mathscr{L}_{(s,a,q)}^{\nabla}(\tau)$ are mirror dual to each other, we see that both the (0,2)-part of the ``local'' curvature form $\tilde{\Omega}_{(s,a,q)}^{\tau}$ of $\nabla_{(s,a,q)}^{\tau}$ and the ``local'' curvature form $\tilde{\Omega}_{\nabla_{\mathcal{L}_{(s,a,q)}^{\tau}}}$ of $\nabla_{\mathcal{L}_{(s,a,q)}^{\nabla}}$ are completely determined by $\tau \in M(n;\mathbb{R})$. As mentioned in subsection 5.3, strictly speaking, in this paper, we do not discuss the compatibility of our proposal and some structures required to define (an analogue of) the Fukaya category over $\check{X}_{\mathcal{G}_{\tau}^{\nabla}}^n$, so in this sense, our proposal discussed in subsection 5.3 and subsection 5.4 is just an idea. However, at least, Proposition \ref{generalizationmirror} implies that to consider an analogue of (the full subcategory of) the Fukaya category over $\check{X}_{\mathcal{G}_{\tau}^{\nabla}}^n$ consisting of objects $\mathscr{L}_{(s,a,q)}^{\nabla}(\tau)$ satisfying the conditions (\ref{general1}), (\ref{general2}) is natural from the viewpoint of the deformation theory and the homological mirror symmetry conjecture for $(X_{\mathcal{G}_{\tau}^{\nabla}}^n,\check{X}_{\mathcal{G}_{\tau}^{\nabla}}^n)$.

Finally, we give a remark. Although we focus on the division $\tau =\tau_{\mathbb{Z}}+\tau_{\rm frac}$ in the above, in general, the choice of such a division of $\tau$ is not unique. For instance, we can consider the trivial division $\tau =O+\tau$ instead of the division $\tau =\tau_{\mathbb{Z}}+\tau_{\rm frac}$\footnote{Of course, although the division $\tau =\tau_{\mathbb{Z}}+\tau_{\rm frac}$ coincides with the trivial division of $\tau$ if $\tau_{\mathbb{Z}}=O$, they are mutually distinct in general.}. Moreover, according to this trivial division of $\tau$, we can twist $\check{X}^n$ by using the flat gerbe $\check{\mathcal{G}}_{\tau}^{\nabla}$ which is obtained by replacing $\tau_{\rm frac}$ in the definition of $\check{\mathcal{G}}_{\tau_{\rm frac}}^{\nabla}$ with $\tau$. In particular, $\check{X}_{\mathcal{G}_{\tau}^{\nabla}}^n$ can also be identified with the deformation of $\check{X}^n$ by this $\check{\mathcal{G}}_{\tau}^{\nabla}$:
\begin{equation*}
\check{X}_{\mathcal{G}_{\tau}^{\nabla}}^n=\check{X}_{\check{\mathcal{G}}_{\tau}^{\nabla}}^n.
\end{equation*}
However, it can be expected that our construction explained in the above does not depend on the choice of such a division of $\tau$. Here, we take the trivial division $\tau =O+\tau$ as an example of a division of $\tau$ which differs from the division $\tau =\tau_{\mathbb{Z}}+\tau_{\rm frac}$, and below, describe this perspective by comparing them.

When we focus on the twisting of $\check{X}^n$ by $\check{\mathcal{G}}_{\tau}^{\nabla}$, each object $\mathcal{L}_{(s,a,q)}^{\nabla}$ is deformed to the $id$-twisted trivial complex line bundle $\mathcal{F}_{(s,a,q)}^{\tau}$ with the twisted flat connection
\begin{equation*}
\nabla_{\mathcal{F}_{(s,a,q)}^{\tau}}:=d-2\pi \mathbf{i}q^t d\check{x}-\pi \mathbf{i} \check{x}^t \tau d\check{x}
\end{equation*}
(it is natural to consider that $L_{(s,a)}$ is preserved under this deformation). We set
\begin{equation*}
\mathcal{F}_{(s,a,q)}^{\nabla}(\tau):=\Bigl( \mathcal{F}_{(s,a,q)}^{\tau}, \ \nabla_{\mathcal{F}_{(s,a,q)}^{\tau}} \Bigr),
\end{equation*}
and denote the pair of $L_{(s,a)}$ and $\mathcal{F}_{(s,a,q)}^{\nabla}(\tau)$ by $\mathscr{F}_{(s,a,q)}^{\nabla}(\tau)$:
\begin{equation*}
\mathscr{F}_{(s,a,q)}^{\nabla}(\tau):=\Bigl( L_{(s,a)}, \ \mathcal{F}_{(s,a,q)}^{\nabla}(\tau) \Bigr).
\end{equation*}
For these objects $\mathscr{F}_{(s,a,q)}^{\nabla}(\tau)$, the analogues of Proposition \ref{generalization} and Proposition \ref{generalizationmirror} hold. Therefore, we can also regard $\mathscr{F}_{(s,a,q)}^{\nabla}(\tau)$ as a mirror of $E_{(s,a,q)}^{\nabla}(\tau)$ in this sense.

Moreover, we define two $id$-twisted smooth complex line bundles with twisted connections $\mathcal{E}_{\tau}^{\nabla}$, $\mathcal{O}_{\tau}^{\nabla}\to \check{X}_{\mathcal{G}_{\tau}^{\nabla}}^n$ as follows. Let $\mathcal{E}_{\tau}^{\nabla}\to \check{X}_{\mathcal{G}_{\tau}^{\nabla}}^n=\check{X}^n(\tau_{\mathbb{Z}})_{\check{\mathcal{G}}_{\tau_{\rm frac}}^{\nabla}}$ be an $id$-twisted smooth complex line bundle which is defined by transition functions expressed locally as 
\begin{equation*}
\psi \left( \begin{array}{cccccccc} x^1 \\ \vdots \\ x^j+1 \\ \vdots \\ x^n \\ y^1 \\ \vdots \\ y^n \end{array} \right)=\mathrm{exp}\left( \pi \mathbf{i} (\tau_{\mathbb{Z}})_j^t \check{x} \right) \cdot \psi \left( \begin{array}{cccccccc} x^1 \\ \vdots \\ x^j \\ \vdots \\ x^n \\ y^1 \\ \vdots \\ y^n \end{array} \right), \ \ \ \psi \left( \begin{array}{cccccccc} x^1 \\ \vdots \\ x^n \\ y^1 \\ \vdots \\ y^k+1 \\ \vdots \\ y^n \end{array} \right)=1 \cdot \psi \left( \begin{array}{cccccccc} x^1 \\ \vdots \\ x^n \\ y^1 \\ \vdots \\ y^k \\ \vdots \\ y^n \end{array} \right),
\end{equation*}
for each $j$, $k=1$, $\cdots$, $n$, where $\psi$ is a locally defined smooth section of $\mathcal{E}_{\tau}^{\nabla}$. We further consider a twisted connection whose connection 1-form is expressed locally as
\begin{equation*}
-\pi \mathbf{i}\check{x}^t \tau_{\mathbb{Z}} d\check{x}-\pi \mathbf{i}\check{x}^t \tau_{\rm frac} d\check{x}
\end{equation*}
on $\mathcal{E}_{\tau}^{\nabla}$. On the other hand, let $\mathcal{O}_{\tau}^{\nabla}\to \check{X}_{\mathcal{G}_{\tau}^{\nabla}}=\check{X}_{\check{\mathcal{G}}_{\tau}^{\nabla}}^n$ be the $id$-twisted trivial complex line bundle with a twisted flat connection whose connection 1-form is expressed locally as
\begin{equation*}
-\pi \mathbf{i}\check{x}^t \tau d\check{x}.
\end{equation*} 
By the definitions of $\mathcal{E}_{\tau}^{\nabla}$ and $\mathcal{O}_{\tau}^{\nabla}$, we can regard $\mathcal{L}_{(s,a,q)}^{\nabla}(\tau)$ and $\mathcal{F}_{(s,a,q)}^{\nabla}(\tau)$ as
\begin{align}
&\mathcal{L}_{(s,a,q)}^{\nabla}(\tau)\cong \Bigl. \mathcal{E}_{\tau}^{\nabla} \Bigr|_{L_{(s,a)}}\otimes \mathcal{L}_{(s,a,q)}^{\nabla}, \label{isomL} \\
&\mathcal{F}_{(s,a,q)}^{\nabla}(\tau)\cong \Bigl. \mathcal{O}_{\tau}^{\nabla} \Bigr|_{L_{(s,a)}}\otimes \mathcal{L}_{(s,a,q)}^{\nabla}, \label{isomF}
\end{align}
respectively.

Hereafter, for simplicity, we assume that a Lagrangian submanifold $L_{(s^1,a^1)}$ intersects transversally with a Lagrangian submanifold $L_{(s^2,a^2)}$. Let us quickly recall the definition of the space of morphisms of $Fuk_{\rm sub}(\check{X}^n)$. Roughly speaking, for any two objects $\mathscr{L}_{(s^1,a^1,q^1)}^{\nabla}$ and $\mathscr{L}_{(s^2,a^2,q^2)}^{\nabla}$, it is given by the vector space
\begin{equation*}
\mathrm{Hom}_{Fuk_{\rm sub}(\check{X}^n)}\left( \mathscr{L}_{(s^1,a^1,q^1)}^{\nabla},\mathscr{L}_{(s^2,a^2,q^2)}^{\nabla} \right)=\bigoplus_{p\in L_{(s^1,a^1)}\cap L_{(s^2,a^2)}}\mathrm{Hom}_{\mathbb{C}}\left( \left. \mathcal{L}_{(s^1,a^1,q^1)}^{\nabla} \right|_p,\left. \mathcal{L}_{(s^2,a^2,q^2)}^{\nabla} \right|_p \right)
\end{equation*}
over $\mathbb{C}$. Now, we assume that both $A_{\infty}$-categories $Fuk^{\rm tw}(\check{X}^n(\tau_{\mathbb{Z}})_{\check{\mathcal{G}}_{\tau_{\rm frac}}^{\nabla}})$ consisting of objects $\mathscr{L}_{(s,a,q)}^{\nabla}(\tau)$ and $Fuk^{\rm tw}(\check{X}_{\check{\mathcal{G}}_{\tau}^{\nabla}}^n)$ consisting of objects $\mathscr{F}_{(s,a,q)}^{\nabla}(\tau)$ can be formulated as analogues of $Fuk_{\rm sub}(\check{X}^n)$. Then, it is natural to consider that the spaces of morphisms of them are given as follows:
\begin{align*}
&\mathrm{Hom}_{Fuk^{\rm tw}(\check{X}^n(\tau_{\mathbb{Z}})_{\check{\mathcal{G}}_{\tau_{\rm frac}}^{\nabla}})}\left( \mathscr{L}_{(s^1,a^1,q^1)}^{\nabla}(\tau),\mathscr{L}_{(s^2,a^2,q^2)}^{\nabla}(\tau) \right) \\
&=\bigoplus_{p\in L_{(s^1,a^1)}\cap L_{(s^2,a^2)}}\mathrm{Hom}_{\mathbb{C}}\left( \left. \mathcal{L}_{(s^1,a^1,q^1)}^{\nabla}(\tau) \right|_p,\left. \mathcal{L}_{(s^2,a^2,q^2)}^{\nabla}(\tau) \right|_p \right), \\
&\mathrm{Hom}_{Fuk^{\rm tw}(\check{X}_{\check{\mathcal{G}}_{\tau}^{\nabla}}^n)}\left( \mathscr{F}_{(s^1,a^1,q^1)}^{\nabla}(\tau),\mathscr{F}_{(s^2,a^2,q^2)}^{\nabla}(\tau) \right) \\
&=\bigoplus_{p\in L_{(s^1,a^1)}\cap L_{(s^2,a^2)}}\mathrm{Hom}_{\mathbb{C}}\left( \left. \mathcal{F}_{(s^1,a^1,q^1)}^{\nabla}(\tau) \right|_p,\left. \mathcal{F}_{(s^2,a^2,q^2)}^{\nabla}(\tau) \right|_p \right).
\end{align*}
In particular, via the isomorphisms (\ref{isomL}), (\ref{isomF}), we have
\begin{align*}
&\mathrm{Hom}_{Fuk^{\rm tw}(\check{X}^n(\tau_{\mathbb{Z}})_{\check{\mathcal{G}}_{\tau_{\rm frac}}^{\nabla}})}\left( \mathscr{L}_{(s^1,a^1,q^1)}^{\nabla}(\tau),\mathscr{L}_{(s^2,a^2,q^2)}^{\nabla}(\tau) \right)\cong \mathrm{Hom}_{Fuk_{\rm sub}(\check{X}^n)}\left( \mathscr{L}_{(s^1,a^1,q^1)}^{\nabla},\mathscr{L}_{(s^2,a^2,q^2)}^{\nabla} \right), \\
&\mathrm{Hom}_{Fuk^{\rm tw}(\check{X}_{\check{\mathcal{G}}_{\tau}^{\nabla}}^n)}\left( \mathscr{F}_{(s^1,a^1,q^1)}^{\nabla}(\tau),\mathscr{F}_{(s^2,a^2,q^2)}^{\nabla}(\tau) \right)\cong \mathrm{Hom}_{Fuk_{\rm sub}(\check{X}^n)}\left( \mathscr{L}_{(s^1,a^1,q^1)}^{\nabla},\mathscr{L}_{(s^2,a^2,q^2)}^{\nabla} \right),
\end{align*}
and these relations indicates the identification
\begin{equation*}
\mathrm{Hom}_{Fuk^{\rm tw}(\check{X}^n(\tau_{\mathbb{Z}})_{\check{\mathcal{G}}_{\tau_{\rm frac}}^{\nabla}})}\left( \mathscr{L}_{(s^1,a^1,q^1)}^{\nabla}(\tau),\mathscr{L}_{(s^2,a^2,q^2)}^{\nabla}(\tau) \right)\cong \mathrm{Hom}_{Fuk^{\rm tw}(\check{X}_{\check{\mathcal{G}}_{\tau}^{\nabla}}^n)}\left( \mathscr{F}_{(s^1,a^1,q^1)}^{\nabla}(\tau),\mathscr{F}_{(s^2,a^2,q^2)}^{\nabla}(\tau) \right).
\end{equation*}
Thus, we can expect that there exists the equivalence
\begin{equation*}
Fuk^{\rm tw}(\check{X}^n(\tau_{\mathbb{Z}})_{\check{\mathcal{G}}_{\tau_{\rm frac}}^{\nabla}})\cong Fuk^{\rm tw}(\check{X}_{\check{\mathcal{G}}_{\tau}^{\nabla}}^n)
\end{equation*}
as $A_{\infty}$-categories which is obtained by considering the correspondence
\begin{equation*}
\mathscr{L}_{(s,a,q)}^{\nabla}(\tau) \longmapsto \mathscr{F}_{(s,a,q)}^{\nabla}(\tau).
\end{equation*}
This implies the following: it can be expected that our construction does not depend on the choice of the divisions $\tau=\tau_{\mathbb{Z}}+\tau_{\rm frac}=O+\tau$ if we can formulate ``$Fuk^{\rm tw}(\check{X}^n(\tau_{\mathbb{Z}})_{\check{\mathcal{G}}_{\tau_{\rm frac}}^{\nabla}})$'' and ``$Fuk^{\rm tw}(\check{X}_{\check{\mathcal{G}}_{\tau}^{\nabla}}^n)$'' as analogues of $Fuk_{\rm sub}(\check{X}^n)$. Also, we can regard ``the equivalence'' 
\begin{equation*}
Fuk_{\rm sub}(\check{X}^n)\stackrel{\sim}{\to} Fuk^{\rm tw}(\check{X}^n(\tau_{\mathbb{Z}})_{\check{\mathcal{G}}_{\tau_{\rm frac}}^{\nabla}})
\end{equation*}
which is induced by the correspondence
\begin{equation*}
\Bigl( L_{(s,a)}, \ \mathcal{L}_{(s,a,q)}^{\nabla} \Bigr) \longmapsto \Bigl( L_{(s,a)}, \ \Bigl. \mathcal{E}_{\tau}^{\nabla} \Bigr|_{L_{(s,a)}}\otimes \mathcal{L}_{(s,a,q)}^{\nabla} \Bigr)
\end{equation*}
as a mirror dual functor of the equivalence (\ref{dgequiv}).

\section*{Acknowledgment}
I would like to thank the referee for reading this paper carefully.

\end{document}